\def\R{\mathbb{R}}
\def\N{\mathbb{N}}
\def\S{\mathbb{S}}
\newtheorem{theo}{Theorem}[section]
  \newtheorem{cor}{Corollary}[section]
    \newtheorem{prop}{Proposition}[section]
        \newtheorem{lem}{Lemma}[section]
\begin{document}

\title{ Yamabe Flow with prescribed scalar curvature }

%    Information for first author
\author{Inas Amacha}
%    Address of record for the research reported here
\address{Laboratoire de Math\'ematiques, UMR 6205 CNRS
Universit\'e de Bretagne Occidentale
6 Avenue Le Gorgeu, 29238 Brest Cedex 3
France}
\email{Inas.Amacha@univ-brest.fr}
%    Information for second author
\author{Rachid Regbaoui }
\address{Laboratoire de Math\'ematiques, UMR 6205 CNRS
Universit\'e de Bretagne Occidentale
6 Avenue Le Gorgeu, 29238 Brest Cedex 3
France}
\email{Rachid.Regbaoui@univ-brest.fr}

%    General info
\subjclass[2000]{  35K55, 58J35, 53A30}
\keywords{conformal metric, scalar curvature, Yamabe flow}

\begin{abstract}
In this work, we study the Yamabe flow corresponding to the prescribed scalar curvature problem on compact Riemannian manifolds with negative scalar curvature. The long time  existence and convergence of the flow are proved  under appropriate conditions on the prescribed scalar curvature function.
 \end{abstract}
\maketitle
\section{Introduction}

\bigskip

The prescribed scalar curvature problem on a compact  Riemannian manifold $(M, g_0)$ of dimension $n \ge 3$, consists of finding a conformal metric $g$ to $g_0$   whose scalar curvature $R_g$   is equal to a given function $f \in C^{\infty}(M)$.  If we set $g = u^{4\over n-2}g_0$, where $ 0< u \in C^{\infty}(M)$, then we have
$$ R_g = u^{-{n+2\over n-2}} \left(-c_n\Delta u + R_0u \right)$$
where $\Delta$ is the Laplace operator associated with $g_0$, $R_0$ is the scalar curvature of $g_0$ and $c_n = 4 {n-1\over n-2}$.

\bigskip

Then the prescribed scalar curvature problem :
$$R_g = f $$

\noindent  is equivalent to solve the following nonlinear PDE :

$$-c_n\Delta u +  R_0 u  = fu^{n+2 \over n-2} \eqno (1.1)$$

\medskip

\noindent on the space of smooth positive functions on $M$.  The solvability of this equation depends on $R_0$ and the prescribed function $f$. When $f$ is constant, equation (1.1) becomes the famous Yamabe equation whose resolution has been for a  long time a challenging problem in geometric Analysis. See \cite{tA}, \cite{eH},  \cite{jL}, \cite{rS1}, \cite{rS2},  for more details on the Yamabe problem, and \cite{aA}, \cite{sB}, \cite{ jE}, \cite{jK}, \cite{aR}, \cite{jV},  concerning the prescribed scalar curvature problem.

\medskip

By changing  conformally  $g_0$ if necessary, we may always assume that $R_0$ satisfies one of the  conditions :  $R_0 > 0\ , \  R_0 = 0$ \  or \ $R_0 < 0$  everywhere on $M$. Equation (1.1) has a variational structure since  there are different functionals whose Euler-Lagrange equations are equivalent to (1.1). When $R_0 < 0$,  the following functional  seems more appropriate to  handle the prescribed scalar curvature  problem :

$$ {\mathcal E}(g) = \int_M R_g dV_g - \frac{n-2}{ n} \int_M fdV_g  \eqno(1.2) $$

\noindent  where $g=  u^{4\over n-2}g_0$ belongs to the conformal class  $ [g_0]$  of $g_0$,  $R_g$ is the scalar curvature of $g$  and $ dV_g = u^{2n\over n-2}dV_{g_0}$ is the volume element of $g$.% In terms of the conformal factor $u^{4\over n-2}$, the functional ${\mathcal E}$ takes the form
%$$E(u) = \int_M (\vert \nabla u\vert^2  + c_n R_0 u^2) dV_{g_0} -{n-2 \over n} \int_M fu^{2n\over n-2} dV_{g_0}  \eqno (1.3) $$

 \medskip

Simple computations (\cite{aB}) show that the $L^2$-gradient of ${\mathcal E}$ is  \  $ {n-2 \over 2n}(R_g-f)g$,
 and then, after changing time by a constant scale,  the associated negative gradient flow equation is

  \medskip

  $$\begin{cases} \partial_t g = - (R_g-f)g  \cr
  g(0) = g^0
  \end{cases}  \eqno (1.3)$$
  where $g^0 = u_0^{4\over n-2}g_0$ is a given metric in the conformal class of $g_0$.

  \medskip

   Since equation (1.3) preserves the conformal structure of $M$, then any  smooth solution of (1.3) is of the form  $g(t) = u(t)^{4\over n-2} g_0$, where $0< u(t) \in C^{\infty}(M)$. For simplicity we have used the notation $u(t):= u(t, .), \ t \in I$  for any function $u$ defined on $I \times M$, where $I$ is a subset of $\R$.   In terms of $u(t)$,  the flow  (1.3) may be written in the equivalent form :

  \medskip

  $$ \begin{cases} \partial_tu^{N} = {n+2 \over 4}\left(c_n\Delta u - R_0u + fu^{N}\right)   \cr
   u(0) = u_0 \in C^{\infty}(M), \ u_0 >0,
   \end{cases} \eqno (1.4) $$
   where $N= {n+2 \over n-2}$.

  \medskip

  Our aim in this paper  is to investigate this gradient flow by proving its longtime existence and analysing its asymptotic   behavior when $t \to + \infty$.
  \bigskip

  Our first result is the  following existence theorem  :
  \medskip

  \begin{theo} Suppose that $R_0 <0$ and  let  $f \in C^{\infty}(M)$. Then  for any  $g^0 = u_0^{4\over n-2} g_0$ with $ 0< u_0 \in C^{\infty}(M)$, there exists a unique solution $g(t) = u(t)^{4\over n-2}g_0$ of (1.3) defined on $[0, +\infty)$, where $0< u \in C^{\infty}( [0, +\infty)\times M)$. Moreover, the functional ${\mathcal E}$ is decreasing along the solution $g(t)$, that's
  $${d\over dt}{\mathcal E}(g(t)) \le 0  \  \    \hbox{for all} \  t \in [0, +\infty) .$$
  \end{theo}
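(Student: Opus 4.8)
The plan is to work with the scalar equation (1.4) and establish local existence, uniform a priori bounds, and then global existence by a continuation argument, with uniqueness and smoothness coming along the way. First I would note that (1.4) is, after dividing by $N u^{N-1}$, a quasilinear parabolic equation for $u$ that is uniformly parabolic as long as $u$ stays bounded away from $0$ and $\infty$; standard parabolic theory (Schauder estimates, or the general theory for quasilinear parabolic equations on compact manifolds) then gives a unique solution $u \in C^{\infty}((0,T)\times M)\cap C^0([0,T)\times M)$ on a maximal time interval $[0,T_{\max})$, with $u(t)>0$ preserved for a short time by continuity. The key point will be to show $T_{\max}=+\infty$, which by the standard continuation criterion reduces to proving that on any finite interval $[0,T]$ the solution satisfies uniform bounds $0 < c(T) \le u(t) \le C(T) < \infty$ together with bounds on its higher derivatives.

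The heart of the argument is the maximum principle, exploiting the sign $R_0 < 0$. Set $\rho := \min_M(-R_0) > 0$. For the upper bound I would evaluate (1.4) at a spatial maximum of $u(t)$: there $\Delta u \le 0$, so $\partial_t u^N \le {n+2\over 4}\big(-R_0 u + f u^N\big) \le {n+2\over 4}\big(\|R_0\|_\infty u + \|f\|_\infty u^N\big)$ at that point, which via the ODE comparison $\frac{d}{dt}\big(\max_M u^N\big) \lesssim \max_M u + \big(\max_M u\big)^{N}$ yields a finite bound on $\max_M u(t)$ on each $[0,T]$ (one must be slightly careful that $\max_M u$ is only Lipschitz in $t$, so I would use Hamilton's trick / a standard approximation argument). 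For the lower bound I would similarly work at a spatial minimum of $u(t)$, where $\Delta u \ge 0$, getting $\partial_t u^N \ge {n+2\over 4}\big(-R_0 u + f u^N\big) \ge {n+2\over 4}\big(\rho u - \|f\|_\infty u^N\big)$ at that point; since $\rho u - \|f\|_\infty u^N > 0$ whenever $u$ is below the threshold $(\rho/\|f\|_\infty)^{1/(N-1)}$, the function $\min_M u(t)$ cannot decrease past $\min\{\min_M u_0,\ (\rho/2\|f\|_\infty)^{1/(N-1)}\}$, giving a uniform positive lower bound independent of $T$. This is the step where the hypothesis $R_0<0$ is essential, and it is the main obstacle — without it the lower bound can degenerate.

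With $0 < c \le u(t) \le C$ on $[0,T]$, the equation is uniformly parabolic with bounded coefficients; bootstrapping via interior parabolic Schauder (or $L^p$) estimates on $(0,T]\times M$ gives uniform $C^{k}$ bounds for every $k$ on $[\delta,T]\times M$, and near $t=0$ the smoothness of $u_0$ gives the corresponding bounds up to the initial slice. Hence the solution extends past $T_{\max}$ if $T_{\max}<\infty$, a contradiction, so $T_{\max}=+\infty$ and $u\in C^\infty([0,+\infty)\times M)$ with $u>0$. Uniqueness follows from the parabolic comparison/energy estimate for two solutions with the same data. Finally, for the monotonicity of ${\mathcal E}$ I would simply differentiate: since (1.3) is the negative $L^2(g)$-gradient flow of ${\mathcal E}$, one computes ${d\over dt}{\mathcal E}(g(t)) = -{n-2\over 2n}\int_M (R_g-f)^2\, dV_g \le 0$. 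Concretely this comes from the standard first-variation formula $\delta{\mathcal E}(g)[h] = {n-2\over 2n}\int_M \langle (R_g-f)g,\, h\rangle\, dV_g$ applied to $h = \partial_t g = -(R_g-f)g$; I would verify this first-variation identity by a direct computation (using $\delta R_g$ and $\delta(dV_g)$ under a conformal variation, referencing \cite{aB}), which requires only the regularity already established.
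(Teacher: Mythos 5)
Your overall strategy coincides with the paper's: local existence from parabolicity, a maximum-principle lower bound exploiting $R_0<0$ and an exponential-in-time upper bound (this is exactly Proposition~2.1 of the paper, which uses the substitution $v=e^{-C_1t}u$ in place of your ODE comparison), then a continuation argument, and finally the gradient-flow identity for ${\mathcal E}$. The monotonicity claim is fine, though your constant is off: since $\langle g,g\rangle_g=n$, the correct identity is ${d\over dt}{\mathcal E}(g(t))=-{n-2\over 2}\int_M(R_g-f)^2\,dV_g$ (the paper's (2.2)), not $-{n-2\over 2n}\int_M(R_g-f)^2\,dV_g$; the sign conclusion is unaffected.

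The genuine gap is in the regularity bootstrap. Writing (1.4) as $\partial_t u={n+2\over 4N}\,u^{1-N}\bigl(c_n\Delta u-R_0u+fu^N\bigr)$, the two-sided $L^\infty$ bound makes the coefficient $a={(n+2)c_n\over 4N}u^{1-N}$ of $\Delta u$ bounded above and below, hence the equation is uniformly parabolic --- but parabolic Schauder estimates require \emph{H\"older continuous} coefficients, and $L^p$ (Calder\'on--Zygmund) estimates require at least continuity/VMO of $a$ with a modulus; mere boundedness of $a$ does not let you start the bootstrap. The H\"older continuity of $u$ (equivalently of $a$) is precisely the nontrivial step, and it is where the paper spends most of Section~2: Lemma~2.1 gives the evolution of $\int_M|R_g-f|^p\,dV_g$, Proposition~2.2 iterates this with the Sobolev inequality to get an $L^{n^2/(2(n-2))}$ bound on $R_g-f$ growing at most exponentially, and Proposition~2.3 converts these integral curvature bounds into a $C^\alpha$ bound on $u$ following Brendle; only then does Schauder apply. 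Your step could be repaired without the curvature estimates by invoking the Krylov--Safonov H\"older estimate for uniformly parabolic equations in nondivergence form with merely measurable coefficients (treating $a$ as a given bounded coefficient), after which $a\in C^\alpha$ and the Schauder bootstrap proceeds as you describe --- but as written, ``Schauder with bounded coefficients'' is not a valid inference, and some argument producing the initial $C^\alpha$ estimate must be supplied.
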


\bigskip

We note here that apart from  the smoothness of $f$, no further assumptions on the function $f$ are needed in Theorem 1.1. However, for the longtime  behavior,  it is necessary to assume additional condition in order to  get the convergence of the flow. Indeed, if $ f\ge 0$,  by applying the maximum principe to  (1.4), one  can easily check that

$$ u(t) \ge \left(\min_{M}|R_0|\min_M u_0\right)t^{n-2\over 4} \to + \infty \ \  \hbox{as} \  t \to + \infty .$$
So if one wants to get the convergence of the flow, it is necessary to assume at least that $f$ is negative somewhere on $M$. We note that this last condition is also  necessary to the resolution  of equation (1.1)  since it is well known that it  if the negative gradient flow associated with a functional ${\mathcal F}$ converges (in some sense), then its limit is a critical point of ${\mathcal F}$.

\medskip

Before giving conditions on  $f$ ensuring the convergence of the flow, let us fix some notations:  if $\Omega \subset M$ is an open set, we denote by $\lambda_{\Omega}$ the first eigenvalue of the conformal Laplacian $L = -c_n\Delta + R_0 $   on $\Omega$ with zero Dirichlet boundary conditions, that is
$$\lambda_{\Omega} = \inf_{0\not=u\in H_0^1(\Omega)}{\int_M (c_n\vert\nabla u\vert^2 + R_0 u^2) dV_{g_0} \over \int_M u^2 dV_{g_0 }}. $$

\medskip

\noindent We then assume the following  conditions on $f$ :

\medskip

\medskip

There exists an open set $\Omega \subset M$ such that

$$     \lambda_{\Omega} >  0   \  \    \hbox{and} \  \   f < 0 \  \hbox{on} \  M\setminus \Omega   \leqno  \bf{(H1) } $$

 \noindent and

$$   \sup_{x\in \Omega} f(x) \le C_{\Omega}\inf_{x\in M\setminus\Omega}\left|f(x)\right|,  \leqno \bf{(H2)}$$

\medskip

\noindent where $C_{\Omega}$ is a positive constant depending only on $\Omega$.

\bigskip

We then have the following result :

\medskip

\begin{theo} Suppose that $R_0 <0$ and that $f \in C^{\infty}(M)$ satisfies conditions $(H1)-(H2)$. Then there exists a function $ 0< {\bar u} \in C^{\infty}(M)$ such that for  any  smooth metric  $g^0 = u_0^{4\over n-2} g_0$ with  $0<u_0 \le {\bar u}$, the flow $g(t) = u(t)^{4\over n-2}g_0$ given by Theorem 1.1 converges in the \ $C^{\infty}$-topology to a conformal metric $g_{\infty} = u_{\infty}^{4\over n-2}g_0$ whose scalar curvature is $f$, that's, \ $R_{g_{\infty}} = f$.
\end{theo}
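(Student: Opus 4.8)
My plan is to establish convergence of the flow through the following chain: (i) an a priori upper barrier that is preserved by the flow, giving uniform $L^\infty$ bounds on $u(t)$; (ii) monotonicity of ${\mathcal E}$ together with a lower bound on ${\mathcal E}$ along the flow, which forces $\int_0^\infty\int_M(R_g-f)^2\,dV_g\,dt<\infty$; (iii) a uniform positive lower bound on $u(t)$; (iv) parabolic regularity to upgrade to $C^\infty$ bounds; (v) extraction of a convergent subsequence whose limit solves $R_{g_\infty}=f$; (vi) upgrading subsequential convergence to full convergence, e.g.\ via a \L ojasiewicz--Simon inequality or a direct argument using the structure of the limit equation.

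\textbf{Step 1: construction of the supersolution $\bar u$.} Using $(H1)$, I would first produce a positive smooth function $w$ on a neighbourhood of $M\setminus\Omega$ that is a supersolution of the \emph{stationary} equation $c_n\Delta w - R_0 w + f w^{N}\le 0$ there; on $M\setminus\Omega$ this is easy since $f<0$ and $R_0<0$, so $c_n\Delta w - R_0 w+ fw^N$ is negative for $w$ large. On $\Omega$, where $f$ may be positive, I would use the hypothesis $\lambda_\Omega>0$: solving the linear problem $Lv = \sup_\Omega f\cdot(\text{const})$ with suitable Dirichlet data and invoking $(H2)$ to control $\sup_\Omega f$ by $\inf_{M\setminus\Omega}|f|$, I expect to glue these pieces into a global positive smooth supersolution $\bar u$ of (1.1) with $c_n\Delta\bar u - R_0\bar u + f\bar u^{N}\le 0$ on all of $M$. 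Condition $(H2)$ with the right constant $C_\Omega$ is precisely what makes the gluing work. Then for $u_0\le\bar u$, the parabolic maximum principle applied to (1.4) (writing the equation for $u^N$ and comparing with the stationary $\bar u$) gives $u(t)\le\bar u$ for all $t\ge0$, hence a uniform upper bound.

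\textbf{Step 2: energy estimate and lower bound on $u$.} Along the flow, ${\mathcal E}(g(t))$ is nonincreasing by Theorem 1.1, and a direct computation gives $\frac{d}{dt}{\mathcal E}(g(t)) = -\frac{n-2}{2}\int_M(R_g-f)^2\,dV_g$ (up to the time-scale constant). I would show ${\mathcal E}$ is bounded below along the flow using the upper bound on $u$ and $(H1)$ (the set where $f<0$ controls the sign of the volume term, and $R_0<0$ helps control $\int R_g\,dV_g$ from below). This yields $\int_0^\infty\int_M(R_g-f)^2\,dV_g\,dt<\infty$. For the lower bound on $u$, I would again use the maximum principle: since $f$ is negative somewhere but the troublesome growth came from $f\ge0$, the upper bound on $u$ feeds back to bound the reaction term, and a subsolution argument (or a Harnack inequality for the parabolic operator once coefficients are controlled) gives $u(t)\ge c>0$ uniformly in $t$.

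\textbf{Step 3: regularity, subsequential and full convergence.} With $0<c\le u(t)\le\bar u$, equation (1.4) is uniformly parabolic with bounded coefficients, so Schauder and $L^p$ parabolic estimates bootstrap to uniform $C^\infty([k,k+2]\times M)$ bounds for all $k\in\N$. The integral decay from Step 2 forces a sequence $t_j\to\infty$ with $\|R_{g(t_j)}-f\|_{L^2}\to0$; combined with the $C^\infty$ bounds and Arzel\`a--Ascoli, a subsequence of $u(t_j)$ converges in $C^\infty(M)$ to some $u_\infty$ with $c\le u_\infty\le\bar u$ solving (1.1), i.e.\ $R_{g_\infty}=f$. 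To get convergence of the full flow (not just a subsequence), I would apply a \L ojasiewicz--Simon gradient inequality for the analytic functional ${\mathcal E}$ near the critical point $g_\infty$: this converts the $L^2$ integrability of $\partial_t g$ into genuine convergence $g(t)\to g_\infty$ in $C^\infty$. \emph{The main obstacle I anticipate is Step 1}: engineering the global supersolution $\bar u$ so that the maximum principle is genuinely available, which is exactly where $(H1)$ and $(H2)$ must be used in a quantitatively sharp way; the constant $C_\Omega$ must be identified with the operator norm of $L^{-1}$ on $\Omega$ (finite because $\lambda_\Omega>0$), and the smooth matching across $\partial\Omega$ requires care. Everything downstream — energy monotonicity, regularity, \L ojasiewicz--Simon — is comparatively standard once the two-sided bounds on $u$ are in hand.
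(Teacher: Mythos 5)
Your proposal follows essentially the same route as the paper: a supersolution $\bar u$ built from (H1)--(H2) (the paper glues a first Dirichlet eigenfunction of $-c_n\Delta+R_0$ on a slightly enlarged domain $D\supset\overline\Omega$ with a constant via a cutoff, the two constraints on the scaling parameter $\delta$ being exactly condition (H2)), the parabolic maximum principle to propagate $u\le\bar u$, the elementary lower bound plus energy monotonicity to get $\int_0^{\infty}\int_M(R_g-f)^2\,dV_g\,dt<\infty$, uniform parabolic regularity, a subsequential limit solving $R_{g_\infty}=f$, and Simon's theorem for uniqueness of the limit. The one place you are lighter than the paper is the uniform $C^{\alpha}$ estimate: since the coefficient $u^{1-N}$ in the quasilinear equation is a priori only bounded, Schauder theory does not apply directly, and the paper first establishes $L^p$-decay of $R_g-f$ for all $p$ (Proposition 3.3, an iteration based on Lemma 2.1) before invoking Brendle's $C^{\alpha}$ estimate; your appeal to Krylov--Safonov-type estimates with bounded measurable coefficients would serve the same purpose.
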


\bigskip

A particular interesting case is when the function $f$ satisfies $f(x) <  0$ for almost all $x \in M$. In this  case conditions (H1)-(H2) are automatically satisfied and then we have  the following corollary:

\medskip

\begin{cor} Suppose that $R_0 <0$ and  $f\in C^{\infty}(M)$ such that  $f < 0$ almost everywhere on $M$. Then there exists a function $ 0< {\bar u} \in C^{\infty}(M)$ such that for  any  smooth metric  $g^0 = u_0^{4\over n-2} g_0$ with  $0<u_0 \le {\bar u}$, the flow $g(t) = u(t)^{4\over n-2}g_0$ given by Theorem 1.1 converges in the \ $C^{\infty}$-topology to a conformal metric $g_{\infty} = u_{\infty}^{4\over n-2}g_0$ whose scalar curvature is $f$, that's, \ $R_{g_{\infty}} = f$.
\end{cor}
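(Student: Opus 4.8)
The plan is to deduce Corollary 1.1 directly from Theorem 1.2 by showing that the hypothesis ``$f<0$ almost everywhere'' already forces conditions $(H1)$ and $(H2)$ to hold for a suitable open set $\Omega$. So the whole task reduces to exhibiting such an $\Omega$.

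First I would record the elementary consequences of the hypothesis. Since $f$ is continuous and $f<0$ a.e., the zero set $Z=\{x\in M:\ f(x)=0\}$ is closed with empty interior (a closed set of Lebesgue measure zero contains no ball), and moreover $f\le 0$ everywhere on $M$; in particular $f$ is nowhere positive. If $Z=\emptyset$, then $f<0$ on the compact manifold $M$ and the result is essentially \emph{classical}; to stay inside the present framework I would simply take $\Omega$ to be a sufficiently small geodesic ball (see below). So assume $Z\neq\emptyset$ and, for $\delta>0$, put $\Omega_\delta=\{x\in M:\ d_{g_0}(x,Z)<\delta\}$, an open neighbourhood of $Z$ whose volume tends to $\abs{Z}=0$ as $\delta\to 0$.

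The key point is that $\lambda_{\Omega_\delta}\to+\infty$ as $\delta\to 0$. This is the Faber--Krahn phenomenon for small domains, and I would get it from the Sobolev inequality on the compact manifold $(M,g_0)$: for $u\in H^1_0(\Omega_\delta)$, extending $u$ by zero and using H\"older's inequality, $\|u\|_{L^2}^2\le \abs{\Omega_\delta}^{2/n}\|u\|_{L^{2n/(n-2)}}^2\le C_S\,\abs{\Omega_\delta}^{2/n}\bigl(\|\nabla u\|_{L^2}^2+\|u\|_{L^2}^2\bigr)$, so that once $\abs{\Omega_\delta}$ is small the first Dirichlet eigenvalue $\mu_{\Omega_\delta}$ of $-\Delta$ on $\Omega_\delta$ is bounded below by a quantity that blows up as $\abs{\Omega_\delta}\to 0$. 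Since $R_0$ is bounded, $\lambda_{\Omega_\delta}\ge c_n\mu_{\Omega_\delta}+\min_M R_0\to+\infty$. Fixing $\delta$ small enough that $\lambda_\Omega>0$ with $\Omega:=\Omega_\delta$, we have $Z\subset\Omega$, hence $f<0$ on the compact set $M\setminus\Omega$; this is exactly $(H1)$.

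Finally, $(H2)$ is automatic for this $\Omega$: the set $M\setminus\Omega$ is compact and $f$ is continuous and strictly negative on it, so $\inf_{x\in M\setminus\Omega}\abs{f(x)}>0$, whereas $\sup_{x\in\Omega}f(x)\le 0$ because $f\le 0$ on all of $M$; therefore $\sup_{\Omega}f\le 0<C_\Omega\inf_{M\setminus\Omega}\abs{f}$ for every positive constant $C_\Omega$, in particular for the one appearing in Theorem 1.2. (When $Z=\emptyset$ the same computation works with $\Omega$ any small ball, since then $\sup_M f<0$.) With $(H1)$ and $(H2)$ verified, Theorem 1.2 applies and yields the function $0<\bar u\in C^\infty(M)$ such that, for every $0<u_0\le\bar u$, the flow converges in the $C^\infty$-topology to a conformal metric $g_\infty=u_\infty^{4/(n-2)}g_0$ with $R_{g_\infty}=f$. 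The only mildly technical step is the eigenvalue bound $\lambda_{\Omega_\delta}\to+\infty$, handled by the Sobolev/H\"older estimate above; everything else is immediate.
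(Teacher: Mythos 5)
Your proof is correct and follows essentially the same route as the paper: verify $(H1)$--$(H2)$ for a small-volume open neighbourhood of the set where $f$ fails to be negative, use the fact that small volume forces the first Dirichlet eigenvalue to be large (hence $\lambda_\Omega>0$), and note that $(H2)$ is automatic because $f\le 0$ everywhere, then invoke Theorem 1.2. The only differences are cosmetic — the paper takes $\Omega_\varepsilon=\{f>-\varepsilon\}$ where you take a metric neighbourhood of the zero set, and you supply the Sobolev/H\"older justification of the eigenvalue blow-up that the paper merely asserts.
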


\bigskip

It is naturel to ask if conditions (H1)-(H2) in Theorem 1.2 are necessary. The following Theorem tells us that this is the case at least for condition (H1).

\bigskip

\begin{theo} Suppose that $R_0<0$ and let  $f \in C^{\infty}(M)$ such that condition (H1) is not satisfied, that's, for any open set \ $\Omega \subset M$ such that  $f > 0$ on  $M \setminus \Omega$,  we suppose  $\lambda_{\Omega} \le 0$. Then for any \  $0< u_0 \in C^{\infty}(M)$, the solution $u(t)$ of (1.4) satisfies  for some constant $C>0$ depending only on $u_0, g_0, f$,
$$\max_{x\in M}u(t,x)  \ge Ct^{n-2\over n+2}  \to +\infty \  \ \hbox{as} \  t \to +\infty .$$
\end{theo}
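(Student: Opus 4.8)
The plan is to reduce the statement to a one–variable differential inequality for a weighted $L^N$–mass of $u$ whose growth is forced by the failure of (H1), after first securing a uniform lower bound on $u$.

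\emph{Step 1: a uniform lower bound and a reduction.} First I would apply the maximum principle to (1.4): at a spatial minimum point $\Delta u\ge 0$, so $\frac{d}{dt}(\min_M u)^N\ge\frac{n+2}{4}\big(\min_M|R_0|\cdot\min_M u-\|f\|_\infty(\min_M u)^N\big)$, and comparison with the associated scalar ODE gives $u(t,\cdot)\ge\mu_0:=\min\{\min_M u_0,\ (\min_M|R_0|/\|f\|_\infty)^{(n-2)/4}\}>0$ for all $t\ge 0$. Since for any $w\ge 0$ one has $(\max_M u(t,\cdot))^N\int_M w\,dV_{g_0}\ge\int_M u^N w\,dV_{g_0}$, it suffices to exhibit a fixed bounded $w\ge 0$, $w\not\equiv 0$, and a time $T$ with $\int_M u^N w\,dV_{g_0}\ge c(t-T)$ for $t\ge T$: taking $N$–th roots gives $\max_M u(t,\cdot)\ge C t^{1/N}=C t^{(n-2)/(n+2)}$ for large $t$, and the bound $u\ge\mu_0$ then lets one decrease $C$ so that the inequality holds for all $t\ge 0$.

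\emph{Step 2: the test function.} I would pick a regular value $\varepsilon>0$ of $f$ and put $\Omega_\varepsilon=\{f<\varepsilon\}$; then $\partial\Omega_\varepsilon$ is a smooth hypersurface and $f\ge\varepsilon>0$ on $M\setminus\Omega_\varepsilon$, so the failure of (H1) forces $\lambda_{\Omega_\varepsilon}\le 0$ (in the degenerate case $\Omega_\varepsilon=M$ the argument below simplifies, using the first eigenfunction of $L$ on $M$). Let $\phi_\varepsilon\ge 0$ be a first Dirichlet eigenfunction of $L=-c_n\Delta+R_0$ on $\Omega_\varepsilon$, normalized and extended by zero to $M$; by Hopf's lemma $-\partial_\nu\phi_\varepsilon\ge\delta_0>0$ on $\partial\Omega_\varepsilon$. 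Differentiating $J(t):=\int_M u^N\phi_\varepsilon\,dV_{g_0}$ along (1.4) and integrating by parts twice, using $L\phi_\varepsilon=\lambda_{\Omega_\varepsilon}\phi_\varepsilon\le 0$, $u>0$, $\phi_\varepsilon\ge 0$, and $\{f<0\}\subset\Omega_\varepsilon$ together with $fu^N\phi_\varepsilon\ge 0$ on $\{f\ge 0\}$, yields
$$\frac{d}{dt}J=\frac{n+2}{4}\Big(-\lambda_{\Omega_\varepsilon}\!\!\int_{\Omega_\varepsilon}\!\!u\phi_\varepsilon-c_n\!\!\int_{\partial\Omega_\varepsilon}\!\!u\,\partial_\nu\phi_\varepsilon+\int_{\Omega_\varepsilon}\!\!fu^N\phi_\varepsilon\Big)\ \ge\ \frac{n+2}{4}\Big(c_1-\int_{\{f<0\}}\!\!|f|\,u^N\phi_\varepsilon\,dV_{g_0}\Big),$$
with $c_1:=c_n\,\delta_0\,\mu_0\,|\partial\Omega_\varepsilon|>0$ depending only on $u_0,g_0,f$.

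\emph{Step 3: the dissipation estimate (main obstacle).} It remains to keep the sink term $\int_{\{f<0\}}|f|u^N\phi_\varepsilon$ below $c_1$, at least in a time–averaged sense, for $t$ large. The mechanism is that on $\{f\le-\delta\}$ equation (1.4) strongly damps the $L^N$–mass: for a smooth cutoff $\chi$ with support in $\{f<-\delta\}$ one gets $\frac{d}{dt}\int_M u^N\chi\,dV_{g_0}\le\frac{n+2}{4}\big(c_n\int_M u\,\Delta\chi+\int_M|R_0|u\chi-\delta\int_M u^N\chi\big)$, so the term $-\delta\int u^N\chi$ dominates the diffusive flux across $\{f=-\delta\}$ and the $|R_0|u$–term as soon as $\int_{\{f<0\}}u^N\,dV_{g_0}$ is large, forcing $\sup_t\int_{\{f\le-\delta\}}u^N\,dV_{g_0}<\infty$; controlling the flux requires an interior parabolic gradient estimate for equation (1.4) near $\{f=-\delta\}$. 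Combining this bound with the smallness of $|\{-\delta<f<0\}|$ (and, if needed, a bootstrap argument starting from the negation of the asserted growth, to close the potential circularity with $\max_M u$) gives $\int_{\{f<0\}}|f|u^N\phi_\varepsilon<c_1$ for $\delta,\varepsilon$ small and $t\ge T$. I expect making this dissipation/flux estimate quantitative to be the crux; the remainder is the bookkeeping of Steps 1–2. Granting it, $\frac{d}{dt}J\ge c>0$ for $t\ge T$, hence $J(t)\ge c(t-T)$, and Step 1 concludes the proof.
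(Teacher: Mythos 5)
Your overall strategy (test (1.4) against a nonnegative first Dirichlet eigenfunction of a level set of $f$, exploit $\lambda\le 0$ together with the sign of the normal derivative, and integrate in time to force linear growth of $\int_M u^N\varphi\,dV_{g_0}\le\max_M u^N$) is exactly the paper's, and your Steps 1--2 reproduce its skeleton. But there is a genuine gap, and it originates in your choice of domains. You take $\Omega_\varepsilon=\{f<\varepsilon\}$, following the sign as printed in the theorem statement; note however that condition (H1) reads ``$f<0$ on $M\setminus\Omega$'', so its negation concerns open sets whose \emph{complement} is contained in $\{f<0\}$, and the paper accordingly works with $\Omega_\varepsilon=\{x: f(x)>-\varepsilon\}$ (the ``$f>0$'' in the theorem statement is a typo, corrected at the start of the paper's proof). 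This choice is not cosmetic: on $\{f>-\varepsilon\}$ one has $\int_{\Omega_\varepsilon}fu^N\varphi_\varepsilon\,dV_{g_0}\ge-\varepsilon\int_{\Omega_\varepsilon}u^N\varphi_\varepsilon\,dV_{g_0}$, so the only possible sink term is of size $\varepsilon$ times the quantity being estimated; a Gr\"onwall integration gives $\max_Mu^N(t)\ge Ct-\tfrac{n+2}{4}\varepsilon_n\int_0^t\max_Mu^N(s)\,ds$, and letting $\varepsilon_n\to0$ along regular values of $f$ (Sard) kills the second term for each fixed $t$. No dissipation estimate is ever needed.

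Your domains $\{f<\varepsilon\}$ instead contain the entire region where $f$ is strongly negative, so you must control $\int_{\{f<0\}}|f|u^N\phi_\varepsilon$ from above by the specific constant $c_1=c_n\delta_0\mu_0|\partial\Omega_\varepsilon|$ for all large $t$ -- and your Step 3, which is supposed to do this, is not a proof. It rests on (i) an unproven uniform-in-time bound on $\int_{\{f\le-\delta\}}u^N\,dV_{g_0}$ obtained by a cutoff computation whose boundary-flux term you propose to control by ``an interior parabolic gradient estimate'' that is not established and would itself require a priori bounds on $u$; (ii) the much stronger requirement that this quantity not merely be bounded but lie \emph{below} $c_1$, for which no mechanism is given ($c_1$ depends on the Hopf constant $\delta_0$ and on $\mu_0$, and there is no reason the dissipation should respect it); and (iii) a ``bootstrap\,/\,negation of the conclusion'' device to break a circularity with $\max_Mu$ that you acknowledge but do not carry out. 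The fix is simply to replace $\{f<\varepsilon\}$ by $\{f>-\varepsilon\}$, after which your Steps 1--2 (with the eigenfunction normalized by $\int\varphi_{\varepsilon_n}\,dV_{g_0}=1$, and with the boundary term bounded via $-c_n\int_{\partial\Omega_{\varepsilon_n}}\partial_\nu\varphi_{\varepsilon_n}=\int_{\Omega_{\varepsilon_n}}(\lambda_{\varepsilon_n}-R_0)\varphi_{\varepsilon_n}$ rather than a Hopf constant) close the argument along the lines of the paper; Step 3 disappears entirely.
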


\bigskip

\bigskip

We note here that condition (H1)  is conformally invariant. Similar conditions to (H1)-(H2) were found by many authors to solve  (1.1) by the direct method of elliptic PDEs, see (\cite{sB}, \cite{aR}, \cite{jV}) for more details.  To our knowledge, the only known results on  Yamabe type flow on dimension $n \ge 3$ concern the case where $f$ is constant  or  $M= \S^n$. The Yamabe flow  was first introduced by Hamilton \cite{rH}  and has been the subject  of several studies, see  \cite{sB1}, \cite{sB2},  \cite{bC}, \cite{hS}, \cite{rY}. When   $f$ is non constant,  we mention the work of Struwe \cite{mS} about  the Nirenberg's problem on the sphere $\S^2$, and   the results of Chen-Xu \cite{xC}  concerning $\S^n$, $n \ge 3$. A general  evolution problem related to the prescribed Gauss curvature on surfaces   was studied by Baird-Fardoun-Regbaoui \cite{pB}.

\bigskip

\medskip

The paper is organized as follows. In section 2 we prove the global existence of the flow by establishing local $C^k$-estimates on the solution $u$ of (1.4). In section 3, we  study the asymptotic behavior of the flow when $t \to+\infty$. In particular we prove  uniform $C^k$-estimates on $u$ which are necessary to  get the convergence of the flow.

\bigskip

\section{global existence of the flow}

\bigskip

In this section we shall establish some estimates on the solution $u$ of (1.4) which will be an important tool in proving that the flow $g(t)$ is globally defined on $[0,+\infty)$. In this section we suppose that $R_0 <0$  and $f \in C^{\infty}(M)$.

\bigskip

As already mentioned in the previous section,  equation (1.3)  is equivalent to (1.4), so it suffices to prove  the existence of a solution $u(t)$ of (1.4) defined on $[0, +\infty)$ to obtain a metric $g(t)$ solution of (1.3)  defined on $[0, +\infty)$.
Since (1.4) is a parabolic equation (on the set of smooth positive functions on $[0, T)\times M$, for any $T>0$), then there exists a smooth solution $u(t)$ of
(1.4) defined on a maximal interval $[0, T^*)$ satisfying  $u(t) > 0$ on $[0, T^*)$.  Thus we have a solution $g(t) = u^{4\over n-2} g_0$ of (1.3) defined on a maximal interval $[0, T^*)$. For simplicity, we shall write $u$ instead of $u(t)$ and $g$ instead of $g(t)$.

\medskip

Now, we derive some properties on $g$ which will be important later. One can check by  using  (1.4)  that the scalar curvature $R_g$ satisfies the following equation

$$\partial_t R_g = (n-1) \Delta_g(R_g -f) + R_g(R_g-f) \eqno (2.1) $$

\medskip

\noindent where $\Delta_g$ is the Laplacian associated with $g(t)$.

\medskip

\medskip

A simple computation using (2.1) gives

$${d \over dt}{\mathcal E}(g) = -{n-2\over 2}\int_{M}(R_g-f)^2 dV_g,  \eqno (2.2)$$

\noindent so the functional ${\mathcal E}$ is decreasing along the flow $g(t)$. If we set
$$E(u) := {\mathcal E}(g) = {\mathcal E}\left(u^{4\over n-2}g_0\right) = \int_{M} \left(c_n\vert \nabla u\vert^2 + R_0u^2  -{n-2\over n}  f u^{2n\over n-2}\right) dV_{g_0},  $$

\noindent then (2.2) can be written in terms of $u$ :

$${d \over dt}E(u) = -{8\over n-2} \int_{M}\vert\partial_tu\vert^2 u^{4\over n-2}dV_{g_0} \le 0.  \eqno (2.3)$$

\medskip

\medskip

The following lemma will be very useful to prove integral estimates on the solution $g$.

\begin{lem} We have for any $p >1$,
 $${d \over dt}\int_M |R_g-f|^p dV_g =  - {4(n-1)(p-1)\over p}\int_M \left\vert\nabla_{\hspace{-0.7mm}g} |R_g-f|^{p\over2} \right\vert_g^2dV_g $$
 $$+ \    \left(p-{n\over 2}\right)\int_M (R_g-f)|R_g-f|^p dV_g
+ \ p\int_M f|R_g-f|^p dV_g ,$$

  \noindent  where $\nabla_{\hspace{-0.7mm}g}$ is the gradient with respect to the metric $g$ and $| \ . \  |_{g}$ is the Riemannian norm with respect to $g$.

 \end{lem}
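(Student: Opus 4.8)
The plan is to differentiate the integral $\int_M |R_g - f|^p \, dV_g$ directly, using the evolution equation (2.1) for $R_g$ together with the fact that the volume form evolves by $\partial_t(dV_g) = -\tfrac{n}{2}(R_g - f)\, dV_g$ (which follows from $\partial_t g = -(R_g - f)g$ in dimension $n$, since $dV_g$ scales like $g^{n/2}$). Writing $w = R_g - f$ for brevity (with $f$ independent of $t$, so $\partial_t w = \partial_t R_g$), we have $\partial_t |w|^p = p\,|w|^{p-2} w\, \partial_t w = p\,|w|^{p-2} w\big((n-1)\Delta_g w + R_g w\big)$. Hence
$$\frac{d}{dt}\int_M |w|^p\, dV_g = \int_M \Big( p(n-1)|w|^{p-2} w\,\Delta_g w + p\,R_g |w|^p - \tfrac{n}{2}(R_g - f)|w|^p\Big) dV_g.$$

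Next I would integrate the Laplacian term by parts on the closed manifold $(M, g)$. Using $\nabla_g(|w|^{p-2}w) = (p-1)|w|^{p-2}\nabla_g w$, we get $\int_M |w|^{p-2} w\,\Delta_g w\, dV_g = -(p-1)\int_M |w|^{p-2}|\nabla_g w|_g^2\, dV_g$. Then I convert this into the stated form involving $\nabla_g |w|^{p/2}$: since $\nabla_g|w|^{p/2} = \tfrac{p}{2}|w|^{p/2-1}\nabla_g w$ (away from the zero set of $w$, which is handled by the usual approximation/Kato-type argument or simply by noting the identity holds a.e.), we have $|\nabla_g|w|^{p/2}|_g^2 = \tfrac{p^2}{4}|w|^{p-2}|\nabla_g w|_g^2$, so $|w|^{p-2}|\nabla_g w|_g^2 = \tfrac{4}{p^2}|\nabla_g|w|^{p/2}|_g^2$. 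Therefore the Laplacian contribution becomes $-p(n-1)(p-1)\cdot\tfrac{4}{p^2}\int_M |\nabla_g|w|^{p/2}|_g^2\, dV_g = -\tfrac{4(n-1)(p-1)}{p}\int_M |\nabla_g|w|^{p/2}|_g^2\, dV_g$, which matches the first term of the claimed identity.

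It remains to collect the zeroth-order terms. I would write $p\,R_g|w|^p - \tfrac{n}{2}(R_g - f)|w|^p = p\,R_g|w|^p - \tfrac{n}{2}w\,|w|^p$ and then substitute $R_g = w + f$ in the first piece: $p\,R_g|w|^p = p\,w|w|^p + p\,f|w|^p$. Combining, the non-gradient terms total $\big(p - \tfrac{n}{2}\big) w\,|w|^p + p\,f|w|^p = \big(p - \tfrac{n}{2}\big)(R_g - f)|R_g - f|^p + p\,f|R_g - f|^p$, exactly the last two terms in the statement.

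The only delicate point is the regularity needed to justify differentiating under the integral sign and integrating by parts when $p$ is not an integer, because $|w|^p$ and $|w|^{p-2}w$ need not be smooth across $\{w = 0\}$. For $p > 1$ the function $s \mapsto |s|^p$ is $C^1$ and $s\mapsto |s|^{p-2}s$ is continuous, and for $p \geq 2$ one even has $C^1$ of the latter; the standard remedy is to replace $|w|$ by $(w^2 + \varepsilon^2)^{1/2}$, carry out the smooth computation, and let $\varepsilon \to 0$ using dominated convergence (the $\varepsilon$-regularized gradient integrand is controlled by the $\varepsilon = 0$ one). I expect this approximation argument to be the main technical obstacle, though it is routine; everything else is a direct computation using (2.1) and the evolution of $dV_g$.
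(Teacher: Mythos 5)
Your proof is correct and takes essentially the same route as the paper: differentiate under the integral using $\partial_t(dV_g)=-\tfrac{n}{2}(R_g-f)\,dV_g$ and equation (2.1), integrate the Laplacian term by parts, rewrite it via $\bigl\vert\nabla_g |w|^{p/2}\bigr\vert_g^2=\tfrac{p^2}{4}|w|^{p-2}\vert\nabla_g w\vert_g^2$, and split $R_g=(R_g-f)+f$ in the zeroth-order terms. The only difference is that you also flag and handle the regularity issue across $\{R_g=f\}$ for non-integer $p$, which the paper passes over in silence.
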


 \medskip

 \begin{proof} We have for any $p\ge 1$

 $${d \over dt}\int_M |R_g-f|^p dV_g  =p\int_M  |R_g-f|^{p-2}(R_g-f)\partial_tR_g dV_g  + {1\over 2}\int_M  R_g  \hspace{0.5mm} tr_g(\partial_t g) dV_g .$$

 \medskip

 \noindent Using equations (1.3) and (2.1) it follows that

  $${d \over dt}\int_M |R_g-f|^p dV_g = (n-1)p\int_M |R_g-f|^{p-2}(R_g-f)\Delta_g(R_g-f)  $$
 $$ + \ p \int_M R_g|R_g-f|^{p} -{n\over 2}\int_M  |R_g-f|^p(R_g-f)dV_g  $$
 $$= - {4(n-1)(p-1)\over p}\int_M \left\vert\nabla_{\hspace{-1mm}g} |R_g-f|^{p\over2} \right\vert_g^2dV_g +  \left(p-{n\over 2}\right)\int_M (R_g-f)|R_g-f|^p dV_g$$
 $$+ \ p\int_M f|R_g-f|^p dV_g .$$

 \end{proof}

\medskip

\medskip

In order to  prove that the solution $g(t) = u(t)^{4\over n-2}g_0$ is globally defined on $[0, + \infty)$, we need upper and lower bounds on $u(t)$. \medskip

\begin{prop} Let $g(t) =  u(t)^{4\over n-2}g_0$ be the solution of $(1.3)$ defined on a maximal interval $[0, T^*)$. Then we have for any $t\in [0, T^*)$ :

$$  \min\left(C_0 , \min_{M} u_0 \right)   \le  u(t) \le  \max\left(1, \max_M u_0\right)e^{C_1t}, \eqno (2.4)$$

\noindent where $  \displaystyle C_0 = \left({\min_{M} |R_0| \over \max_{M}|f| }\right)^{n-2\over 4}\hspace{-1.5mm},  \   C_1 = {n-2\over 4}\left( \max_M|R_0| +  \max_M |f|\right)$ .
\end{prop}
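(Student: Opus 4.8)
The plan is to establish the two bounds separately by applying the maximum principle to the scalar PDE (1.4), rewritten in a form where $\partial_t u$ (rather than $\partial_t u^N$) appears on the left. Dividing (1.4) by $N u^{N-1}$ and using $N = \frac{n+2}{n-2}$, one gets
$$\partial_t u = \frac{n-2}{4}\,u^{1-N}\bigl(c_n\Delta u - R_0 u + f u^N\bigr) = \frac{n-2}{4}\Bigl(c_n u^{1-N}\Delta u - R_0 u^{2-N} + f u\Bigr).$$
Since $R_0<0$, the term $-R_0 u^{2-N}$ is positive, which drives the lower bound, while $f u$ is controlled by $\max_M|f|\cdot u$ for the upper bound.

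\textbf{Lower bound.} First I would fix $t_0 \in [0,T^*)$ and look at the spatial minimum $m(t) = \min_{x\in M} u(t,x)$ on $[0,t_0]$. At a point $(t,x_t)$ realizing this minimum with $t>0$ one has $\Delta u(t,x_t)\ge 0$, hence along the minimum
$$\frac{d}{dt}m(t) \ge \frac{n-2}{4}\bigl(-R_0(x_t) m(t)^{2-N} + f(x_t) m(t)\bigr) \ge \frac{n-2}{4}\,m(t)\bigl(\min_M|R_0|\,m(t)^{1-N} - \max_M|f|\bigr),$$
using $2-N = 1-N+1$ and $1-N = -\frac{4}{n-2}<0$. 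The right-hand side is nonnegative precisely when $m(t)^{1-N}\ge \max_M|f|/\min_M|R_0|$, i.e. when $m(t)\le C_0$ with $C_0 = (\min_M|R_0|/\max_M|f|)^{(n-2)/4}$ (note $(1-N)^{-1} = -\frac{n-2}{4}$, so the inequality flips correctly). A standard ODE-comparison / barrier argument then shows $m(t)$ cannot drop below $\min(C_0,\min_M u_0)$: if it starts above $C_0$ it stays above $C_0$, and if it starts below, $m$ is nondecreasing until it reaches $C_0$. This gives the left inequality of (2.4). The technical care needed here — justifying differentiability of $m(t)$ or replacing the derivative argument by a clean sub/supersolution comparison with the constant $\min(C_0,\min_M u_0)$ — is the main point to handle rigorously; invoking a parabolic comparison principle with a constant subsolution is the cleanest route.

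\textbf{Upper bound.} For the upper estimate I would proceed symmetrically with $M(t)=\max_{x\in M}u(t,x)$, or directly verify that $\bar u(t,x):=\max(1,\max_M u_0)\,e^{C_1 t}$ is a supersolution of (1.4). At a maximum point $\Delta u\le 0$, so
$$\frac{d}{dt}M(t)\le \frac{n-2}{4}\bigl(-R_0 M(t)^{2-N}+ f M(t)\bigr)\le \frac{n-2}{4}\bigl(\max_M|R_0|\,M(t)^{2-N}+\max_M|f|\,M(t)\bigr).$$
As long as $M(t)\ge 1$ we have $M(t)^{2-N}\le M(t)$ (since $2-N<1$), so $\frac{d}{dt}M(t)\le \frac{n-2}{4}(\max_M|R_0|+\max_M|f|)M(t) = C_1 M(t)$, and Grönwall gives $M(t)\le \max(1,\max_M u_0)e^{C_1 t}$; if $M(t)$ ever dips below $1$ the bound is trivially retained because the right side of (2.4) is $\ge e^{C_1 t}\ge 1$. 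This yields the right inequality of (2.4), and since $t_0<T^*$ was arbitrary the bounds hold on all of $[0,T^*)$, completing the proof.
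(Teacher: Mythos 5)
Your proposal is correct and follows essentially the same route as the paper: an elementary maximum-principle argument at spatial extrema, where $\Delta u\ge 0$ (resp. $\le 0$) turns (1.4) into the algebraic inequality that produces $C_0$ (resp. the growth rate $C_1$). The paper phrases it via a space-time extremum of $u$ and of $e^{-C_1t}u$ rather than your ODE comparison for $\min_M u(t,\cdot)$ and $\max_M u(t,\cdot)$, but the computations and the constants obtained are identical.
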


\begin{proof}
The proof uses an elementary maximum principal argument. Indeed, fix $t \in [0, T)$ and let $(t_0, x_0) \in [0, t]\times M$ such that $ \displaystyle u(t_0, x_0) = \min_{ [0,t]\times M}u $. If $t_0 = 0$, then
$$ \min_{ [0,t]\times M}u= \min_{ M}u_0 , $$

\noindent so the first inequality in (2.4) is proved in this case. Now suppose that $t_0 >0$. We have then $\partial_t u(t_0, x_0) \le 0$ and  $\Delta u(t_0, x_0) \ge 0$. Thus we obtain after substituting in (1.4) that

$$0 \ge -R_0(x_0)u(t_0, x_0) + f(x_0)u^{N}(t_0,x_0) $$
which implies that

$$ u(t_0, x_0) \ge \left({\min_{M} |R_0| \over \max_{M}|f| }\right)^{1\over N-1}, $$
where $N = {n+2 \over n-2}$. \ This proves the first inequality in (2.4).  In order to prove the second inequality we set    $v= e^{-C_1t}u$ instead of $u$, where $\displaystyle C_1= {4\over n-2}\left(\max_M|R_0| + \max_M |f|\right) $. As above, fix $t \in [0, T)$ and let $(t_0, x_0) \in [0, t]\times M$ such that $ \displaystyle v(t_0, x_0) = \max_{ [0,t]\times M}v $. If $t_0 = 0$, then  \ $ \displaystyle \max_{ [0,t]\times M}v= \max_{ M}u_0$, which implies
$$ \max_{ [0,t]\times M}u  \le  \max_{ M}u_0 \  e^{C_1t},$$

\noindent so the second inequality in (2.4) is proved in this case. Now suppose that $t_0 >0$. We have then $\partial_t v(t_0, x_0) \ge 0$ and  $\Delta v(t_0, x_0) \le 0$, that's, $ \partial_tu(t_0,x_0) \ge C_1 u(t_0,x_0)$ and    $\Delta u(t_0, x_0) \le 0 $.  We obtain after substituting in (1.4) that

$$NC_1 u^N(t_0, x_0)  \le {n+2 \over 4}\left(-R_0(x_0)u(t_0, x_0) + f(x_0)u^{N}(t_0,x_0) \right)$$
which implies that
$$u(t_0, x_0) \le 1 \eqno (2.5)$$

\medskip

\noindent since $\displaystyle NC_1 =  {n+2 \over 4}  \left(\max_M|R_0| + \max_M |f| \right)$.  It is clear that (2.5) implies that
$$\max_{[0,t]\times M }u \le e^{C_1t} .$$
 The Proof of Proposition 2.1 is then complete.

\end{proof}

\bigskip

Now we prove integral estimates on $R_g$ which will imply estimates on  $\partial_t u $ :

\medskip

\begin{prop}
Let $g(t)$ be the solution of $(1.3)$ defined on a maximal interval $[0, T^*)$. Then  we have for any   $t \in [0, T^*)$,
 $$\int_M |R_{g(t)}-f|^{p} dV_{g(t)} \le Ce^{Ct} \eqno (2.6) $$
 where  $p = {n^2\over 2(n-2)}$  and $C$ is a positive constant depending only on $f, g_0 , u_0$.
 \end{prop}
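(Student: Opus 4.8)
The plan is to use the differential identity from Lemma 2.1 with a suitable exponent $p$ and to absorb the dangerous term $\left(p-\frac{n}{2}\right)\int_M (R_g-f)|R_g-f|^p\,dV_g$ using a Sobolev-type inequality for the metric $g$. First I would fix the exponent so that the cubic-in-$(R_g-f)$ term can be controlled: writing $w = |R_g-f|^{p/2}$, the bad term is at most $\left|p-\frac{n}{2}\right|\int_M |R_g-f|\,w^2\,dV_g$, and by H\"older's inequality this is bounded by $\left|p-\frac{n}{2}\right|\,\bigl(\int_M |R_g-f|^{n/2}\,dV_g\bigr)^{2/n}\bigl(\int_M w^{2n/(n-2)}\,dV_g\bigr)^{(n-2)/n}$. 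The first factor is a \emph{conformally invariant} quantity; the second factor is the $L^{2^\ast}$-norm of $w$, which by the Yamabe/Sobolev inequality on $(M,g)$ is controlled by $\int_M(c_n|\nabla_g w|_g^2 + R_g w^2)\,dV_g$ modulo a constant. This is exactly why $p = \frac{n^2}{2(n-2)}$ is chosen: then $p\cdot\frac{2}{n}\cdot\frac{n-2}{n}\cdot\frac{?}{}$... more precisely $\frac{p}{2}\cdot\frac{2}{n} \cdot$ gives the exponent $\frac{n}{2}$ inside the first factor, i.e. $\frac{n}{2} = p\cdot\frac{2}{n}$ forces $p = \frac{n^2}{4}$ — so I would recheck the bookkeeping, the point being that the chosen $p$ makes the conformally invariant Sobolev exponent appear exactly.

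Concretely, I would proceed as follows. Step one: control the conformally invariant factor $\int_M |R_g - f|^{n/2}\,dV_g$. Using $|R_g-f|^{n/2} \le C(|R_g|^{n/2} + |f|^{n/2})$ and the fact that $\int_M |R_g|^{n/2}\,dV_g$ and $\int_M |f|^{n/2}\,dV_g = \int_M |f|^{n/2} u^{2n/(n-2)}\,dV_{g_0}$ are both conformally natural, I would bound these using the monotonicity of $\mathcal{E}$ (equation (2.2)–(2.3)) together with the upper bound $u(t)\le Ce^{C_1 t}$ from Proposition 2.1; this already forces any $L^{n/2}$-type bound to grow at worst like $e^{Ct}$, and the lower bound $u\ge C_0'>0$ is needed to keep $dV_g$ and $dV_{g_0}$ comparable from below. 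Step two: the Sobolev inequality $\bigl(\int_M w^{2n/(n-2)}\,dV_g\bigr)^{(n-2)/n} \le A(M,g)\int_M |\nabla_g w|_g^2\,dV_g + B(M,g)\int_M w^2\,dV_g$; here the constants depend on $g$, which is a problem since $g$ evolves, so I would instead use the conformally invariant form and relate it to $g_0$, writing $w = |R_g-f|^{p/2}$ and expressing everything in terms of $u$ and $g_0$ so that the Sobolev constant is that of $(M,g_0)$, fixed once and for all. Step three: plug the resulting estimate into Lemma 2.1. The gradient term $-\frac{4(n-1)(p-1)}{p}\int_M |\nabla_g w|_g^2$ on the right-hand side is negative and of exactly the right form to absorb the positive contribution $C\int_M|\nabla_g w|_g^2$ coming from the Sobolev bound on the bad term, \emph{provided} the conformally invariant factor $\int_M|R_g-f|^{n/2}\,dV_g$ is small — but it need not be small, only finite. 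So instead I expect one splits $M$ into the region where $|R_g-f|$ is large (small volume, controllable) and where it is bounded, or one uses a cutoff; alternatively, since the coefficient is $\left(p-\frac{n}{2}\right)$ and one only needs an \emph{a priori} bound on a finite interval, the cruder route is: after absorbing, one gets $\frac{d}{dt}\Phi(t) \le C\Phi(t) + C e^{Ct}$ where $\Phi(t) = \int_M|R_g-f|^p\,dV_g$, the extra $f$-term $p\int_M f|R_g-f|^p\,dV_g$ being handled the same way (H\"older plus the bound on $\int|f|^{n/2}dV_g$). Gr\"onwall's inequality then yields $\Phi(t)\le Ce^{Ct}$, which is (2.6).

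The main obstacle I anticipate is the absorption step: the term $\left(p-\frac{n}{2}\right)\int_M(R_g-f)|R_g-f|^p\,dV_g$ is genuinely cubic and supercritical in scaling, and a naive H\"older/Sobolev bound produces a factor of $\int_M|R_g-f|^{n/2}\,dV_g$ multiplying $\int_M|\nabla_g w|_g^2\,dV_g$, which can only be absorbed into the good term if that factor is below a fixed threshold. Making this rigorous requires either (i) showing $\int_M|R_g-f|^{n/2}\,dV_g$ stays below the critical Sobolev threshold for all $t<T^\ast$ — plausible if the initial metric is chosen with small energy, but the proposition as stated has no such hypothesis — or (ii) localizing: on the set $\{|R_g-f|>K\}$ the volume is small by Chebyshev (using the already-established bound with a smaller exponent, bootstrapping from $p$ slightly below $\frac{n^2}{2(n-2)}$ up to it), so the offending integral over that set is small, while over the complementary set $|R_g-f|\le K$ the cubic term is bounded by $K$ times the $L^p$ norm and goes into the $C\Phi(t)$ bucket. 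I would bet the authors run an induction on a finite increasing sequence of exponents $p_1 < p_2 < \cdots$, at each stage using the previous $L^{p_k}$-bound to control the conformal factor, terminating at $p = \frac{n^2}{2(n-2)}$; getting the exponent arithmetic and the Gr\"onwall constants to close consistently is the delicate part, everything else being standard parabolic bookkeeping combined with Proposition 2.1 and the energy monotonicity (2.3).
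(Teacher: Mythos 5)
Your overall frame (Lemma 2.1 plus Sobolev plus Gr\"onwall) is reasonable, but the proof as proposed does not close, and you have in fact put your finger on exactly the place where it fails: the absorption of the cubic term $\left(p-\frac{n}{2}\right)\int_M(R_g-f)|R_g-f|^p\,dV_g$. Your route (i) needs an $L^{n/2}$-smallness hypothesis that the proposition does not have, as you yourself note; your route (ii) (Chebyshev localization, or bootstrapping from slightly smaller exponents) does not resolve the supercritical scaling either, because on the set where $|R_g-f|$ is large the integral $\int|R_g-f|^{p+1}$ is still only controlled by the product of an $L^{n/2}$-norm over that set with the full Sobolev term, and nothing forces that $L^{n/2}$-norm below the absorption threshold. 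The observation you are missing --- and it is the linchpin of the paper's argument --- is that the coefficient of the cubic term in Lemma 2.1 is exactly $p-\frac{n}{2}$, so for $p=\frac{n}{2}$ \emph{there is no cubic term at all}: Lemma 2.1 with $p=\frac{n}{2}$ immediately gives $\frac{d}{dt}\int_M|R_g-f|^{n/2}\,dV_g\le C\int_M|R_g-f|^{n/2}\,dV_g$, hence an $L^{n/2}$ bound of the form $Ce^{Ct}$ with no absorption needed. Keeping the (nonnegative) gradient term on the left and integrating in time, the Sobolev inequality for the \emph{fixed} metric $g_0$ (made applicable by the two-sided bounds on $u$ from Proposition 2.1, at the price of factors $e^{Ct}$) then yields the space-time estimate $\int_0^t\bigl(\int_M|R_{g(s)}-f|^{\frac{n^2}{2(n-2)}}\,dV_{g(s)}\bigr)^{\frac{n-2}{n}}ds\le Ce^{Ct}$.

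The second step, which your proposal also lacks, is how this is used at $p=\frac{n^2}{2(n-2)}$. One does not absorb the cubic term into the gradient term with a fixed $\varepsilon$; instead one takes $\varepsilon$ of size $e^{-Ct}$ in the Young inequality (affordable because the good Sobolev term on the left-hand side already carries the weight $C^{-1}e^{-Ct}$), pays the price $\varepsilon^{-n/(2p-n)}\Phi_p^{1+2/(2p-n)}$, and observes that for this specific $p$ one has $\frac{2}{2p-n}=\frac{n-2}{n}$, so that dividing by $\Phi_p$ gives $\frac{d}{dt}\log\Phi_p\le Ce^{Ct}\Phi_p^{(n-2)/n}+C$, whose time integral is finite precisely by the space-time estimate above. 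This is the real reason the exponent $p=\frac{n^2}{2(n-2)}=\frac{n}{2}\cdot\frac{n}{n-2}$ appears (it is the Sobolev improvement of $\frac{n}{2}$), not the bookkeeping you attempted in the first paragraph. Without the vanishing of the coefficient at $p=\frac{n}{2}$ and the subsequent log-Gr\"onwall step, the inequality $\frac{d}{dt}\Phi\le C\Phi+Ce^{Ct}$ you invoke at the end is not established, so the argument has a genuine gap.
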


 \medskip

 \begin{proof} In what follows \  $C$ denotes a positive constant depending on $f, g_0 , u_0$,  whose value may change from line to line.  

 %Since by Proposition 2.? the volume of $g$ is bounded by  $e^{CT}$ on $[0, T]$, then in order to prove (2?) it suffices by using H\"older inequality to assume  that$p = k  \ge 2$ is an integer.  We shall  proceed by induction. If $k = 2$, by integrating (2.?) on $[0, T]$ we have
 %$$\int_0^T\int_M |R_g-f|^2 dV_g dt =  {\mathcal E}(g^0) - {\mathcal E}(g(T))  $$
% But by (2.?) and (2.?) we have
 %$$E(g(T)) = \int_MR_{g(T)} dV_{g(T)} -  \frac{n-2}{ n} \int_M fdV_{g(T)} \le C e^{CT} .$$
%So it follows from (2.?) that

%$$\int_0^T\int_M |R_g-f|^2 dV_g dt \le  C e^{CT} .$$

 \medskip

  %We have from  Lemma 2.1 by taking $p=2$

  %$${d \over dt}\int_M |R_g-f|^2 dV_g =  -  2(n-1) \int_M \left\vert\nabla_{\hspace{-1mm}g} (R_g-f)\right\vert_g^2dV_g +
 % \left(2-{n\over 2}\right)\int_M (R_g-f)|R_g-f|^2 dV_g$$
 %$$+ \ p\int_M f|R_g-f|^2 dV_g ,$$

\medskip

We have by Lemma 2.1  for any $t\in [0,  T^*)$

 $${d \over dt}\int_M |R_g-f|^p dV_g =  - {4(n-1)(p-1)\over p}\int_M \left\vert\nabla_{\hspace{-0.7mm}g} |R_g-f|^{p\over2} \right\vert_g^2dV_g $$
 $$+  \  \left(p-{n\over 2}\right)\int_M (R_g-f)|R_g-f|^p dV_g
 + \ p\int_M f|R_g-f|^p dV_g , \eqno (2.7)$$

 \noindent  where $\nabla_{\hspace{-0.7mm}g}$ is the gradient with respect to the metric $g$ and $| \ . \  |_{g}$ is the Riemannian norm with respect to $g$.

 \medskip

 $${d \over dt}\int_M |R_g-f|^p dV_g  + {4(n-1)(p-1)\over p}\int_M \left\vert\nabla_{\hspace{-1mm}g} |R_g-f|^{p\over2} \right\vert_g^2dV_g$$
  $$\le \  \left|p-{n\over 2}\right|\int_M |R_g-f|^{p+1} dV_g  + C \int_M |R_g-f|^p dV_g .  \eqno (2.8) $$

 By (2.4) we have

 $$ \int_M \left\vert\nabla_{\hspace{-1mm}g} |R_g-f|^{p\over2} \right\vert_g^2dV_g =  \int_M  \left\vert\nabla |R_g-f|^{p\over2} \right\vert^2  u^2 dV_{g_0}
  \ge C\int_M \left\vert\nabla |R_g-f|^{p\over2} \right\vert^2 dV_{g_0} \eqno (2.9) $$
  and
  $$  \int_{M} |R_g - f|^{p} dV_{g} =  \int_{M} |R_g - f|^{p} u^{2n\over n-2}dV_{g_0}  \ge C  \int_{M} |R_g - f|^{p} dV_{g_0} . \eqno (2.10) $$

  \medskip

  \medskip

  By Sobolev's inequality we have
  $$ \left(\int_{M} |R_g - f|^{pn\over n-2} dV_{g_0}\right)^{n-2 \over n} \le  C \left(\int_M \left\vert\nabla |R_g-f|^{p\over2} \right\vert^2 dV_{g_0} +  \int_{M} |R_g - f|^{p} dV_{g_0}\right)$$

 \noindent  which gives by using  (2.4) that

   $$ \left(\int_{M} |R_g - f|^{pn\over n-2} dV_{g}\right)^{n-2 \over n} \le  Ce^{Ct} \left(\int_M \left\vert\nabla |R_g-f|^{p\over2} \right\vert^2 dV_{g_0} +  \int_{M} |R_g - f|^{p}
 dV_{g_0}\right).  \eqno (2.11) $$

 \medskip

 It follows from (2.8),  (2.9) ,  (2.10)  and (2.11) that

 $$ {d \over dt}\int_M |R_g-f|^p dV_g  + C^{-1} e^{-Ct} \left(\int_{M} |R_g - f|^{pn\over n-2} dV_{g}\right)^{n-2 \over n} $$
 $$   \le  \  \left(p-{n\over 2}\right)\int_M |R_g-f|^{p+1} dV_g  + C \int_M |R_g-f|^p dV_g .  \eqno (2.12) $$

 \medskip

 \bigskip

 By taking  $p= {n\over 2}$ in (2.12) we get

 $$ {d \over dt}\int_M |R_g-f|^p dV_g  \le  C \int_M |R_g-f|^p dV_g $$

 \noindent which implies that

 $$\int_{M} |R_g-f|^{n\over 2} dV_g \le Ce^{Ct} . \eqno (2.13)$$

 \medskip

 Now taking again $p= {n\over 2}$ in (2.12) and  integrating  on $[0, t], t \in [0, T^*)$,   we obtain  by using (2.13)

 $$ \int_0^t\left(\int_{M} |R_{g(s)} - f|^{n^2\over 2(n-2)} dV_{g(s)}\right)^{n-2 \over n} \hspace{-1,5mm}ds  \  \le \   Ce^{Ct} . \eqno (2.14) $$

\medskip

\medskip

We have by H\"older's  inequality and Young's inequality,  for any $\varepsilon >0$   and $p > {n\over 2}$,

$$ \int_M |R_g-f|^{p+1} dV_g  \le \varepsilon \left(\int_{M} |R_g - f|^{pn\over n-2} dV_{g}\right)^{n-2 \over n} +
\varepsilon^{-{n\over 2p-n}} \left( \int_M |R_g - f|^{p} dV_{g}\right)^{2p-n+ 2 \over 2p-n} . \eqno (2.15)$$

\medskip

\bigskip

If we combine  (2.15) with (2.12) and  taking $\varepsilon = (p-{n\over 2})^{-1}C^{-1}e^{-Ct}$,   we get

$${d \over dt}\int_M |R_g-f|^p dV_g  \le  Ce^{Ct}  \left( \int_M |R_g - f|^{p} dV_{g}\right)^{2p-n+ 2 \over 2p-n} +   C \int_M |R_g - f|^{p} dV_{g} $$
that's
$${d \over dt}\log\left(\int_M |R_g-f|^p dV_g\right) \le C\left(e^{Ct}  \left( \int_M |R_g - f|^{p} dV_{g}\right)^{ 2 \over 2p-n} + 1\right) $$

\medskip

In particular by choosing $p = {n^2 \over 2(n-2)}$ and integrating on  $[0, t], \ t \in [0, T^*)$,  we obtain

$$\log\left(\int_M |R_{g(t)}-f|^{n^2\over 2(n-2)} dV_{g(t)}\right) \le    \log\left(\int_M |R_{g(0)}-f|^{n^2\over 2(n-2)} dV_{g(0)}\right) $$
$$ + \ Ce^{Ct}\int_0^t \left( \int_M |R_{g(s)} - f|^{n^2\over 2(n-2)} dV_{g(s)}\right)^{ n-2 \over n}\hspace{-1,5mm}ds  \ + \  Ct $$

\noindent which by using (2.14) gives

$$\log\left(\int_M |R_{g(t)}-f|^{n^2\over 2(n-2)} dV_{g(t)}\right) \le Ce^{t}. $$
This proves Proposition 2.2.

\end{proof}

\bigskip

With the estimates of Proposition 2.1 one would like to apply the classical Shauder estimates for parabolic equations. To this end  we need $C^{\alpha}$-estimates :

\medskip

\begin{prop} Let $g(t) =  u(t)^{4\over n-2}g_0$ be the solution of $(1.3)$ defined on a maximal interval $[0, T^*)$. Then we have
for some $\alpha \in (0, 1)$  and any $T \in [0, T^*)$
$$\| u\|_{C^{\alpha}([0, T] \times M)} \le Ce^{CT} $$

\noindent where  $C$ is a positive constant depending only on $u_0, g_0$ and $f$.

\end{prop}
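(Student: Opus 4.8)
The plan is to derive the Hölder estimate from an $L^\infty$ bound on $\partial_t u$ together with the already-established bounds on $u$ (Proposition 2.1) and on the scalar curvature (Proposition 2.2), then feed these into the parabolic regularity theory. First I would recall that by Proposition 2.1 we have $0 < c \le u(t,x) \le Ce^{CT}$ on $[0,T]\times M$, so equation (1.4) is a uniformly parabolic equation for $u$ on $[0,T]\times M$ (the coefficient $\tfrac{n+2}{4}u^{N-1}$ in front of $\partial_t u$, when one writes $\partial_t u^N = Nu^{N-1}\partial_t u$, is bounded above and below away from zero). Rewriting (1.4) as
$$ \partial_t u = \tfrac{n+2}{4N} u^{1-N}\bigl(c_n\Delta u - R_0 u + f u^N\bigr), $$
the right-hand side is, after using the bounds on $u$, of the form $a(t,x)\Delta u + b(t,x)$ with $a$ bounded between positive constants and $b$ bounded on $[0,T]\times M$ by $Ce^{CT}$ — except that this only controls $\Delta u$ in $L^\infty$ once we control $\partial_t u$, so the two estimates must be obtained together.

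The key step is therefore to control $\partial_t u$ in a suitable space. I would use the relation $\partial_t g = -(R_g-f)g$, i.e. in terms of $u$ one has $\partial_t u = -\tfrac{n-2}{4}(R_g - f)u$, so that
$$ |\partial_t u| = \tfrac{n-2}{4}\,|R_g - f|\,u. $$
By Proposition 2.2, $\|R_g - f\|_{L^p(M, dV_g)} \le Ce^{Ct}$ with $p = \tfrac{n^2}{2(n-2)}$; since $u$ is bounded above and below, this is equivalent to an $L^p(M, dV_{g_0})$ bound on $R_g - f$, hence an $L^p$ bound on $\partial_t u$ uniformly in $t\in[0,T]$ (up to the factor $Ce^{CT}$). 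Crucially $p = \tfrac{n^2}{2(n-2)} > \tfrac{n+2}{2}$, the parabolic Sobolev exponent in dimension $n+1$, so an $L^p_{t,x}$ bound on $\partial_t u$ and on $\Delta u$ gives, via $L^p$ parabolic estimates (the Calderón–Zygmund theory for $\partial_t - \Delta$) applied to the equation $\partial_t u - c'\Delta u = (\text{bounded }L^p\text{ data})$, a bound on $u$ in the parabolic Sobolev space $W^{2,1}_p([0,T]\times M)$, which embeds into $C^\alpha([0,T]\times M)$ for $\alpha = 1 - \tfrac{n+2}{p} \in (0,1)$. All constants appearing are of the form $Ce^{CT}$, which is absorbed into the stated bound.

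The main obstacle I anticipate is the bootstrapping structure: to apply $L^p$ parabolic theory to $\partial_t u - c_n' a(t,x)\Delta u = F$ one needs either that $a(t,x)$ be itself Hölder (so one should first treat the equation as $\partial_t u - c\Delta u = (c - c_n' a)\Delta u + F$ with the first term only known in $L^p$, which is fine for the $W^{2,1}_p$ estimate since $\Delta u \in L^p$ follows from the equation once $\partial_t u \in L^p$), and one must check the integrability threshold is actually met — this is exactly why the precise exponent $p = \tfrac{n^2}{2(n-2)}$ was extracted in Proposition 2.2, since $\tfrac{n^2}{2(n-2)} - \tfrac{n+2}{2} = \tfrac{n^2 - (n+2)(n-2)}{2(n-2)} = \tfrac{4}{2(n-2)} = \tfrac{2}{n-2} > 0$. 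A secondary point requiring care is the passage between norms with respect to $dV_g$ and $dV_{g_0}$ and between $\Delta_g$ and $\Delta_{g_0}$, handled routinely by Proposition 2.1; and one must note that all estimates are on a fixed $[0,T]\subset[0,T^*)$, so no claim of uniformity in $T$ is needed here, only the explicit dependence $Ce^{CT}$.
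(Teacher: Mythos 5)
Your strategy is sound and is, in substance, the argument the paper is pointing to: the paper's own ``proof'' of this proposition is a one-line citation of Proposition 2.6 in Brendle's paper \cite{sB1}, and that argument runs exactly along your lines --- use $\partial_t u = -\tfrac{n-2}{4}(R_g-f)u$ together with the $L^\infty$ bounds of Proposition 2.1 and the $L^p$ bound of Proposition 2.2 to control $\partial_t u$ in $L^\infty_t L^p_x$, read off $\Delta u \in L^\infty_t L^p_x$ algebraically from the equation (no parabolic regularity theory needed, which neutralizes the coefficient-regularity circularity you worry about in your last paragraph), and conclude by Sobolev embedding. So your parenthetical remark is in fact the right resolution: one only needs the \emph{embedding} of the anisotropic space $W^{2,1}_p$ into a H\"older class, which is a pure function-space statement, not the Calder\'on--Zygmund estimate for a parabolic operator with rough leading coefficients.

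The one concrete error is the embedding exponent. The anisotropic Sobolev embedding $W^{2,1}_p([0,T]\times M)\hookrightarrow C^{\alpha,\alpha/2}$ holds for $p>\tfrac{n+2}{2}$ with $\alpha = 2-\tfrac{n+2}{p}$, not $\alpha = 1-\tfrac{n+2}{p}$. With your formula and $p=\tfrac{n^2}{2(n-2)}$ one gets $\tfrac{n+2}{p}=2-\tfrac{8}{n^2}$, hence $\alpha = \tfrac{8}{n^2}-1<0$ for $n\ge 3$, so as written the final step produces no H\"older exponent at all. Your threshold check $p-\tfrac{n+2}{2}=\tfrac{2}{n-2}>0$ is the correct one and is exactly what saves the argument: with the correct formula one obtains $\alpha = \tfrac{8}{n^2}\in(0,1)$ (H\"older exponent $\alpha/2$ in time), which is all the proposition asks for. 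With that correction, and with the elliptic route ($\Delta u(t)\in L^p$ uniformly, hence $u(t)\in W^{2,p}(M)\hookrightarrow C^{0,\beta}(M)$ since $p>\tfrac{n}{2}$, plus the standard interpolation in time using $\partial_t u\in L^\infty_t L^p_x$) made explicit in place of the parabolic $W^{2,1}_p$ estimate, your proof is complete and matches the cited one.
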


\begin{proof} By using Proposition 2.1 and Proposition 2.2, the proof is identical to that of Proposition 2.6 in Brendle \cite{sB1}.

\end{proof}

\begin{proof}[Proof of Theorem 1.1]  Let $g(t) =  u(t)^{4\over n-2}g_0$ be the solution of $(1.3)$ defined on a maximal interval $[0, T^*)$.  Assume by contradiction that $T^* < + \infty$. Then by  using Proposition 2.1 and Proposition 2.3  we have

$$ \|u\|_{C^{\alpha}([0, T^*)\times M)}  \le Ce^{CT^*}  \  \   \hbox{and} \  \  \min_{[0, T^*)\times M}u \ge \min(C_0, \min_M u_0). $$

\noindent for some $\alpha \in (0, 1)$, where $C$ is a positive constant depending $u_0, f, g_0$.  The classical theory of linear parabolic equations applied to (1.4) implies that $u$ is bounded in $C^{k}([0, T^*)\times M)$ for any $k \in \N$, that's
$$\|u\|_{C^{k}([0, T^*)\times M)} \le C_{k} \ , \eqno (2.16)$$
where $C_k$ is a positive constant depending only on $u_0, g_0, f$ and $k$. It is clear that (2.16) allows us to extend the solution beyond $T^*$ contradicting thus the maximality of $T^*$.  We see from (2.2) that the functional ${\mathcal E}$ is decreasing along the flow. The proof of Theorem 1.1 is then complete.
\end{proof}

\bigskip

\section{Long Time behavior of the flow}

\bigskip

In this section we study the asymptotic behavior of the flow $g(t)$ when $t \to +\infty$. First we prove the following proposition which gives a super solution of equation (1.1) when conditions (H1)-(H2) are satisfied.

\bigskip

\begin{prop}
Suppose that there exists an open set $\Omega \subset M$ such that conditions $(H1)-(H2)$ are satisfied. Then there exists a conformal metric ${\bar g} = {\bar u}^{4\over n-2}g_0$,  \ $0< {\bar u} \in C^{\infty}(M)$,  satisfying

$$ R_{\bar g} - f \ge 0 \eqno (3.1)$$
or equivalently
$$-c_n\Delta {\bar u} +  R_0 {\bar u}  -  f{\bar u}^{N} \ge 0 , \  N= {n+2\over n-2}. \eqno (3.2)$$
 \end{prop}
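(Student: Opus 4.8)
The plan is to construct $\bar u$ as a sum of two pieces adapted to the decomposition $M = \Omega \cup (M\setminus\Omega)$, exploiting the two hypotheses separately. On the bad set $M\setminus\Omega$, where $f<0$, a large constant (or more precisely a function bounded below by a large constant) works: if $\bar u \ge A$ on $M\setminus\Omega$ with $A$ huge, then $-f\bar u^N \ge \inf_{M\setminus\Omega}|f|\, A^N$ dominates the linear terms $-c_n\Delta\bar u + R_0\bar u$, provided we control $\Delta\bar u$ and the size of $\bar u$ there. On the good set $\Omega$, where $\lambda_\Omega>0$, the operator $L=-c_n\Delta+R_0$ with zero Dirichlet data is positive, so I would use the first Dirichlet eigenfunction $\varphi>0$ of $L$ on $\Omega$ (which satisfies $L\varphi=\lambda_\Omega\varphi\ge0$, $\varphi=0$ on $\partial\Omega$, $\varphi>0$ inside). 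The natural ansatz is
$$\bar u = A\,\psi + \varepsilon\,\varphi,$$
where $\psi$ is a fixed smooth positive function that equals $1$ near $M\setminus\Omega$ and is supported so as to not interfere badly on $\Omega$, $A$ is a large constant, and $\varepsilon>0$ is chosen afterwards; one must be slightly careful that $\varphi$ only blows the right way up (its normal derivative at $\partial\Omega$ is negative, so $-c_n\Delta\varphi$ stays bounded below appropriately) — alternatively one solves $L\varphi = 1$ on $\Omega$ with zero boundary data, which is cleaner since then $\varphi>0$, $\varphi\in C^\infty(\bar\Omega)$, and $L\varphi\equiv1$.

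Concretely I would first fix a smooth positive function $\chi$ on $M$ with $\chi\equiv 1$ on a neighborhood of $M\setminus\Omega$, and set $\bar u = A\chi + \varepsilon v$ on $\Omega$ where $v$ solves $Lv=1$ in $\Omega$, $v=0$ on $\partial\Omega$, extended by $0$ outside; then smooth things out. Then I compute $L\bar u - f\bar u^N$ separately on the two regions. On $M\setminus\Omega$: $L\bar u \ge -c_n A\Delta\chi + R_0 A\chi$, which is $\ge -C A$ for a constant $C$ depending on $\chi, g_0, R_0$, while $-f\bar u^N \ge \inf_{M\setminus\Omega}|f|\cdot A^N$ (using $\bar u \ge A$ there since $\chi\ge$ some positive constant and $v\ge0$); so for $A$ large, $A^N$ beats $CA$ and (3.2) holds on $M\setminus\Omega$. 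On $\Omega$: here $L\bar u = A L\chi + \varepsilon Lv = A L\chi + \varepsilon$; the term $A L\chi$ could be negative, of size $-CA$, and $-f\bar u^N$ could be as negative as $-\sup_\Omega f\cdot \bar u^N$ where $\bar u \le A\max\chi + \varepsilon\max v =: C'A$; so I need $\varepsilon - CA - \sup_\Omega f\cdot (C'A)^N \ge 0$. This is exactly where hypothesis (H2) enters: rewriting the requirement and absorbing, one needs $\sup_\Omega f$ bounded by a constant times $\inf_{M\setminus\Omega}|f|$ (which controlled how large $A$ had to be), i.e. the structural inequality (H2) with an appropriate $C_\Omega$ is precisely what makes the two regimes compatible for a single choice of $A$.

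The main obstacle — and the point requiring care — is the interface near $\partial\Omega$, where $v$ is only Lipschitz (its normal derivative jumps) and where the cutoff $\chi$ transitions, so the naive patched $\bar u$ is not smooth and $\Delta\bar u$ may have a bad (distributional negative) part along $\partial\Omega$. I would handle this by not patching along $\partial\Omega$ directly: instead choose $\Omega' \Supset \Omega$ slightly larger with $f<0$ still on $M\setminus\Omega'$ and $\lambda_{\Omega'}>0$ (possible by continuity of first Dirichlet eigenvalues under domain perturbation and openness of $\{f<0\}$), build the eigenfunction/torsion function on $\Omega'$, multiply by a smooth cutoff supported in $\Omega'$ and equal to $1$ on $\Omega$, and verify (3.2) directly on the three zones $\Omega$, $\Omega'\setminus\Omega$, and $M\setminus\Omega'$ — on the buffer zone $\Omega'\setminus\Omega$ one still has $f<0$ (if $\Omega'$ chosen inside $\{f<0\}^c$'s complement appropriately) so the large-$A$ argument of the bad region applies. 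After obtaining a (possibly only $C^2$ or Lipschitz) supersolution, a standard mollification/elliptic-regularity step upgrades $\bar u$ to a smooth strict supersolution, or one simply notes a smooth function can be chosen from the start by the $\Omega'$ trick; the constant $C_\Omega$ in (H2) is then whatever emerges from these fixed geometric choices.
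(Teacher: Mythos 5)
Your construction is essentially the one in the paper. The paper takes a smooth domain $D$ with $\overline{\Omega}\subset D\subset \Omega_{\varepsilon}$ (so that $\lambda_{D}\ge \lambda_{\Omega_{\varepsilon}}>0$), the first Dirichlet eigenfunction $\varphi_0$ of $-c_n\Delta+R_0$ on $D$ normalised so that $0<\varphi_0\le 1$, a cutoff $\chi\in C_0^{\infty}(D)$ with $\chi=1$ on $\overline{\Omega}$, and sets $\bar u=\delta(\chi\varphi_0+1-\chi)$; the inequality is then verified on $\Omega$ and on $M\setminus\Omega$ separately, exactly as you propose, and your ``enlarge to $\Omega'$ and cut off'' device for avoiding the non-smooth interface is precisely the paper's $D$--$\chi$ device (the gluing happens in $D\setminus\overline{\Omega}$, which lies in $M\setminus\Omega$ where $f<0$). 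Using the torsion function $Lv=1$ instead of the eigenfunction is an immaterial variant.

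The one place where your argument does not close as written is the estimate on $\Omega$. You bound $\bar u\le C'A$ and then demand $\varepsilon-CA-\sup_{\Omega}f\cdot (C'A)^N\ge 0$; but $C'$ depends on $\varepsilon$ (through $\varepsilon\max v$), and when $\sup_{\Omega}f>0$ this demand forces $\varepsilon\gtrsim A^{N}$, which destroys the bound $\bar u\le C'A$ --- the requirement is circular. Relatedly, ``take $A$ large'' is not available when $\sup_{\Omega}f>0$: the bad set only imposes a \emph{lower} bound of the form $A^{N-1}\inf_{M\setminus\Omega}|f|\ge c$, while the good set imposes an \emph{upper} bound $A^{N-1}\sup_{\Omega}f\le c'$, and the scale must be chosen in the resulting window; (H2) is exactly the statement that this window is nonempty. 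The paper's single dilation parameter makes this transparent: one needs $m_1m_0^{-N}\big(\inf_{M\setminus\Omega}|f|\big)^{-1}\le \delta^{N-1}\le \lambda_D\big(\sup_{\Omega}f\big)^{-1}$, whence $C_{\Omega}=\lambda_D m_0^{N}/m_1$. Your scheme can be repaired --- either replace the source term $\varepsilon$ by the positivity $\lambda_D\delta\varphi_0$ supplied by the eigenfunction, which scales linearly with the same parameter as the rest of $\bar u$, or carry out the two-parameter optimisation honestly --- but as written the key quantitative step on $\Omega$ is not correct.
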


 \bigskip

 \begin{proof} By hypothesis, there is an open set $\Omega \subset M$ satisfying $(H1)-(H2)$, that's

 $$     \lambda_{\Omega} >  0   \  \    \hbox{and} \  \   f < 0 \  \hbox{on} \  M\setminus \Omega   \leqno  \bf{(H1) } $$

 \noindent and

$$   \sup_{x\in \Omega} f(x) \le C_{\Omega}\inf_{x\in M\setminus\Omega}\left|f(x)\right|,  \leqno \bf{(H2)}$$
\medskip

\noindent where $C_{\Omega}$ is a positive constant depending only on $\Omega$.

\medskip

\medskip

Let $\varepsilon >0$ and set
$$\Omega_{\varepsilon} = \{ x \in M \ : \  d(x, \Omega) < \varepsilon \ \}.$$

\medskip

For $\varepsilon >0$ sufficiently small we have from (H1) that $\lambda_{\Omega_{\varepsilon}} > 0$, where $\lambda_{\Omega_{\varepsilon}}$ is the first eigenvalue of the operator $-c_n\Delta + R_0$ on $\Omega_{\varepsilon}$  with zero Dirichlet boundary conditions. Let $D \subset M$  be an open set of smooth boundary such  that $\overline{\Omega} \subset  D  \subset  \Omega_{\varepsilon}$. Then we have $\lambda_{D} \ge \lambda_{\Omega_{\varepsilon}} > 0$.  Let $\varphi_{0}$ an eigenfunction  associated  with $\lambda_{D} $, that's

$$-c_n\Delta \varphi_0 + R_0\varphi_0 = \lambda_{D}\varphi_0 . $$
\medskip

Then we have that $\varphi_0 \in C^{\infty}(\overline{D})$ and using the maximum principle of elliptic equations one has $\varphi_0 >0 $ on   $D$.
By normalising if necessary, we may suppose that

$$0< \varphi_0 \le 1 \  \  \hbox{on} \   D.  \eqno (3.3) $$

\medskip

Let $\chi \in C^{\infty}_0(D)$ such that $ 0 \le \chi \le 1$ and $\chi = 1$ on $\overline{\Omega}$.  We define the function ${\bar u} \in C^{\infty}(M)$ by setting

$${\bar u} = \delta \left( \chi \varphi_0 + 1-\chi\right), $$

\medskip

\noindent where $\delta >0$ will be chosen later.  By (3.3) and the definition of $\chi$ it is easy to check that

$$m_0 := \inf_M \left( \chi \varphi_0 + 1-\chi\right) > 0, $$
so
$$ {\bar u} \ge \delta \hspace{0.3mm} m_0.  \eqno (3.4) $$

\bigskip

Now let us prove that ${\bar u}$ satisfies (3.2). If we set

$${\mathcal L}({\bar u}) = -c_n\Delta {\bar u} +  R_0 {\bar u}  - f{\bar u}^{n+2 \over n-2}, $$

\noindent then (3.2) is equivalent to ${\mathcal L}({\bar u}) \ge 0$.

\medskip

A simple computation shows that we have on $\Omega$ (using the fact that $\chi = 1$ on $\overline{\Omega}$ ) :

$${\mathcal L}({\bar u})  = \lambda_{D}\delta  \varphi_0 - f \delta^{N}\varphi_0^N =  \delta\varphi_0\left( \lambda_{D}- \delta^{N-1}f\varphi_0^{N-1} \right) $$

\medskip

\noindent and by using (3.3) it follows that

$${\mathcal L}({\bar u})  \ge \delta\varphi_0\left( \lambda_{D}- \delta^{N-1}\sup_{x\in \Omega}f(x) \right).   \eqno (3.5) $$

\medskip

%If $ \displaystyle \sup_{x\in \Omega} f(x) \le 0$, then (3.4) implies that ${\mathcal L}({\bar u})  \ge 0$  on $ \Omega$. If $ \displaystyle \sup_{x\in \Omega} f(x) > 0$, then  by   choosing $\delta >0$ such that

It follows from (3.5) that if we want ${\mathcal L}({\bar u})  \ge 0$  on $\Omega$, we have to choose $\delta >0$ satisfying

$$\delta^{N-1}\sup_{x\in \Omega}f(x) \le  \lambda_{D} . \eqno (3.6)$$

\medskip

\medskip

Now we examine the sign of  ${\mathcal L}({\bar u})$  on $M\setminus \Omega$.  We have   from the definition of ${\bar u}$ that

$${\mathcal L}({\bar u}) =\delta\left( -c_n \Delta + R_0 \right)(\chi \varphi_0 + 1-\chi )- f {\bar u}^N . \eqno (3.7)$$

\bigskip

\noindent By using (3.4) and the fact that $f < 0$ on $M \setminus \Omega$, it follows from (3.7)

\bigskip

$${\mathcal L}({\bar u}) \ge-  \delta \hspace{0.3mm} m_1 +  \delta^{N} m_0^{N} \inf_{x \in M \setminus \Omega}|f(x)|   $$
where

$$ m_1 = \sup_M\left| \left( -c_n \Delta + R_0 \right)(\chi \varphi_0 + 1-\chi )\right| .$$

\medskip

Thus, if we want ${\mathcal L}({\bar u}) \ge 0$ on $M\setminus \Omega$,  we have to assume that

$$ -m_1 +  \delta^{N-1} m_0^{N} \inf_{x \in M \setminus \Omega}|f(x)|  \ge 0$$
that's
$$\delta^{N-1}\inf_{x \in M \setminus \Omega}|f(x)| \ge  m_1 m_0^{-N}. \eqno (3.8)$$

It is clear that the existence of $\delta >0$ satisfying both (3.6) and (3.8)  is  equivalent to condition (H2) with $C_{\Omega} = {\lambda_D m_0^N \over m_1}$.  This achieves the proof of Proposition 3.1.

\end{proof}

\bigskip

Proposition 3.1 allows us to prove uniform $L^{\infty}$-estimates on the flow.

\bigskip

\begin{prop} Let $0< u_0 \in C^{\infty}(M)$ such that $u_0 \le {\bar u}$ where ${\bar u}$ is given by Proposition 3.1. Then  the solution  $u$ of (1.4) satisfies for any $(t,x)  \in [0, +\infty)\times M$

$$ \min\left(C_0, \min_M u_0\right)  \le u(t,x) \le \max_{M}{\bar u}, \eqno (3.9) $$
where $\displaystyle C_0 = \left({\min_{M} |R_0| \over \max_{M}|f| }\right)^{n-2\over 4}$.
\end{prop}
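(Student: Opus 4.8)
The plan is to prove the two inequalities in (3.9) separately, the lower bound being already contained in Proposition 2.1 and the upper bound being the genuinely new content, which follows from a comparison (maximum) principle using the supersolution $\bar u$ constructed in Proposition 3.1. For the lower bound, recall that Proposition 2.1 gives $u(t,x)\ge \min(C_0,\min_M u_0)$ on the whole maximal interval of existence, which by Theorem 1.1 is $[0,+\infty)$; so this half is free and requires no further work.

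For the upper bound $u(t,x)\le \max_M\bar u$, the idea is to show that $\bar u$ is a (time-independent) supersolution of the parabolic equation (1.4) and that $u_0\le\bar u$, so the comparison principle forces $u(t,\cdot)\le\bar u$ for all $t$. Concretely, I would argue by contradiction with a maximum-principle argument analogous to the one in the proof of Proposition 2.1: suppose the conclusion fails, and consider the function $w:=u-\bar u$ on $[0,T]\times M$ for arbitrary $T>0$. By hypothesis $w(0,\cdot)\le 0$. If $\max_{[0,T]\times M} w>0$, it is attained at some interior-in-time point $(t_0,x_0)$ with $t_0>0$, where $\partial_t w(t_0,x_0)\ge 0$ (so $\partial_t u^N(t_0,x_0)\ge 0$ since $u(t_0,x_0)>\bar u(x_0)>0$ and $s\mapsto s^N$ is increasing) and $\Delta w(t_0,x_0)\le 0$, hence $\Delta u(t_0,x_0)\le\Delta\bar u(x_0)$. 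Plugging into (1.4) at $(t_0,x_0)$,
$$
0\le \partial_t u^N(t_0,x_0)=\tfrac{n+2}{4}\big(c_n\Delta u - R_0 u + f u^N\big)(t_0,x_0)
\le \tfrac{n+2}{4}\big(c_n\Delta\bar u - R_0 u(t_0,x_0) + f u^N(t_0,x_0)\big).
$$
Using (3.2), i.e. $c_n\Delta\bar u\le R_0\bar u - f\bar u^N$ at $x_0$ (rearranging ${\mathcal L}(\bar u)\ge 0$), and writing $a=u(t_0,x_0)>b=\bar u(x_0)>0$, this yields $0\le R_0(x_0)(b-a)+f(x_0)(a^N-b^N)$. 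The term $R_0(b-a)>0$ since $R_0<0$ and $b<a$, but one must control the sign of $f(a^N-b^N)$; I would handle this by distinguishing $x_0\in\overline\Omega$ from $x_0\in M\setminus\overline\Omega$, exactly paralleling the two-region analysis of Proposition 3.1 — on $M\setminus\overline\Omega$ one has $f<0$ so $f(a^N-b^N)<0$ and the contradiction is immediate; on $\overline\Omega$, where $f$ may be positive, one needs the quantitative inequality encoded by the specific form of $\bar u$ (the eigenfunction estimate $\lambda_D - \delta^{N-1}f\varphi_0^{N-1}\ge 0$) together with a convexity/monotonicity comparison of $a^N-b^N$ against $a-b$.

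The main obstacle I anticipate is precisely this case $x_0\in\overline\Omega$: the naive maximum-principle inequality is not strong enough on its own because $f$ can be positive there, so the gain must come from the strict positivity of $\lambda_D$ built into $\bar u$. The cleanest way around this is to exploit that on $\Omega$ one actually has the sharper computation from Proposition 3.1, namely ${\mathcal L}(\bar u)=\delta\varphi_0(\lambda_D-\delta^{N-1}f\varphi_0^{N-1})$, so that $c_n\Delta\bar u - R_0\bar u + f\bar u^N = -\delta\varphi_0(\lambda_D-\delta^{N-1}f\varphi_0^{N-1})\le 0$ with a definite margin; combined with $R_0<0$ and $a>b$ this should still close. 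Alternatively — and perhaps more robustly — one can run the comparison on the rescaled quantity $e^{-\mu t}(u-\bar u)$ for a suitable large $\mu$ to absorb the possibly-bad zeroth-order term, reducing everything to the strict sign at the maximum; but since $\bar u$ is a genuine stationary supersolution, the direct argument above should suffice. Once the upper bound $u\le\max_M\bar u$ on $[0,T]\times M$ is established for every $T>0$, letting $T\to+\infty$ gives (3.9), completing the proof.
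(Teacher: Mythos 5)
Your overall strategy --- lower bound quoted from Proposition 2.1, upper bound by comparison with the stationary supersolution $\bar u$ of Proposition 3.1 --- is the same as the paper's. But your ``direct'' maximum-principle argument has a genuine gap, and it is not located where you think it is. At the hypothetical positive maximum of $w=u-\bar u$ you correctly arrive at
$$0\le R_0(x_0)(b-a)+f(x_0)\left(a^N-b^N\right),\qquad a=u(t_0,x_0)>b=\bar u(x_0)>0,$$
but, as you yourself note, $R_0(x_0)(b-a)>0$ because $R_0<0$ and $b<a$. A positive first term plus a negative second term yields no contradiction, so the case $x_0\in M\setminus\overline{\Omega}$ is \emph{not} immediate: to contradict the displayed inequality there you would need $|f|\,\frac{a^N-b^N}{a-b}>|R_0|$ at $x_0$, i.e.\ a quantitative lower bound on $\bar u(x_0)^{N-1}$ of order $|R_0|/(N|f|)$, which the construction of $\bar u$ (built from a parameter $\delta$ that need not be large) does not supply; the case $f(x_0)=0$ already shows the pointwise inequality is consistent with $a>b$. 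The same defect persists on $\overline{\Omega}$. The true obstruction is therefore not the sign of $f$ but the sign of the zeroth-order coefficient coming from $-R_0$, which is positive --- the ``wrong'' sign for a naive pointwise comparison --- precisely because $R_0<0$.

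The repair is exactly the device you mention and then discard as unnecessary. The paper sets $v=\bar u-u$ and writes $\bar u^N-u^N=av$ with $a=N\int_0^1\bigl(s\bar u+(1-s)u\bigr)^{N-1}ds>0$ bounded, so that subtracting (1.4) from (3.2) gives the \emph{linear} parabolic inequality $\partial_t(av)\ge\frac{n+2}{4}\left(c_n\Delta v-R_0v+afv\right)$ with $v(0,\cdot)\ge0$; the weak maximum principle for linear parabolic operators with bounded zeroth-order coefficient (proved by the substitution $v=e^{\mu t}\tilde v$ with $\mu$ large, or by a first-touching-time argument in which $v=0$ annihilates the zeroth-order terms) then yields $v\ge0$ on every $[0,T]\times M$, hence $u\le\bar u\le\max_M\bar u$. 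You should promote your ``alternative'' rescaled comparison from a parenthetical remark to the main argument; without it the proof does not close.
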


\bigskip

\begin{proof}
First observe that the first inequality in (3.9) is already proved in Proposition 2.1. It remains then to prove the second inequality, that's,
$$ u(t,x) \le \max_{M}{\bar u}. $$

Let $v = {\bar u} - u$. Since $u$ satisfies (1.4) and ${\bar u}$ satisfies (3.2), then we have

$$\partial_t \left({\bar u}^N - u^N\right)  \ge {n+2 \over 4}\left(c_n\Delta v -R_0 v +  f\left({\bar u}^N- u^N\right)\right).  \eqno (3.10)$$

We have \   ${\bar u}^N - u^N = av$, where

$$a(t,x) = N\int_0^1\bigl(s \hspace{0.2mm}{\bar u}(t,x) + (1-s)u(t,x)\bigr)^{N-1} ds,$$

so it follows from (3.10) that

$$\partial_t(av)  \ge {n+2 \over 4}\left(c_n\Delta v -R_0 v + afv\right).   \eqno (3.11) $$

\medskip

Since $v(0, x) = {\bar u}(x) -u_0(x) \ge 0$, then by applying the maximum principle to (3.11)  we get  $v(t,x) \ge 0$ for any $t \ge 0$, that's
$$u(t, x) \le {\bar u}(x) .$$

Proposition 3.2 is then proved.
\end{proof}

\bigskip

Now we prove that the integral estimate (2.6) in Proposition 2.2 can be improved when $t \to +\infty$. More precisely, we have

\medskip

\begin{prop}
 Let $0< u_0 \in C^{\infty}(M)$ such that $u_0 \le {\bar u}$ where ${\bar u}$ is given by Proposition 3.1. Let $g(t)$ the solution of  (1.3) given by Theorem 1.1 such that $g(0) =u_0^{4\over n-2}g_0$.   Then we have for any $p \ge 1$
$$\lim_{t\to +\infty}\int_M |R_{g(t)}-f|^{p} dV_{g(t)}  = 0. \eqno (3.12) $$
\end{prop}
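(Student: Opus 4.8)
The plan is to exploit the monotonicity of the energy together with the integral inequality of Lemma~2.1, now used with the uniform bounds coming from Proposition~3.2 instead of the exponentially growing ones. The key observation is that, thanks to the two-sided bound $\min(C_0,\min_M u_0)\le u(t,x)\le\max_M{\bar u}$ valid for \emph{all} $t\ge 0$, every place in the proof of Proposition~2.2 where a factor $e^{Ct}$ appeared (coming from the upper bound on $u$ in Proposition~2.1) can now be replaced by a fixed constant $C$ depending only on $u_0,g_0,f$. In particular the Sobolev-type inequality (2.11) becomes, uniformly in $t$,
$$\left(\int_M|R_g-f|^{\frac{pn}{n-2}}dV_g\right)^{\frac{n-2}{n}}\le C\left(\int_M\bigl|\nabla|R_g-f|^{p/2}\bigr|^2 dV_{g_0}+\int_M|R_g-f|^p dV_{g_0}\right),$$
and likewise (2.9) and (2.10) hold with constants independent of $t$.

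First I would establish (3.12) for the single exponent $p=\frac n2$. Starting from (2.3), integrating in time and using $\partial_t u^N=\frac{n+2}{4}u^{N-1}\partial_t u^N\cdots$ — more precisely, rewriting (2.3) as $\int_0^\infty\int_M|\partial_t u|^2 u^{4/(n-2)}dV_{g_0}\,dt\le\frac{n-2}{8}\bigl(E(u_0)-\inf E\bigr)<\infty$, where $\inf E>-\infty$ because $E(u)$ is bounded below on the (now compact) range of admissible $u$ — one gets $\int_0^\infty\!\int_M|\partial_t u|^2\,dV_{g_0}\,dt<\infty$. Since $R_g-f$ is, up to the bounded factor $u^{-N}$ and a multiple of $\partial_t u^N$, controlled by $\partial_t u$, this yields $\int_0^\infty\!\int_M|R_g-f|^2\,dV_g\,dt<\infty$. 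I would then feed this into the version of (2.12) with $p=\frac n2$, namely $\frac{d}{dt}\int_M|R_g-f|^{n/2}dV_g\le C\int_M|R_g-f|^{n/2}dV_g$ together with a genuinely negative Sobolev term, to run a differential-inequality / interpolation argument: the finiteness of the time integral of a quantity comparable to $\int_M|R_g-f|^{n/2}dV_g$ plus a uniform bound on its time derivative forces $\int_M|R_g-f|^{n/2}dV_g\to 0$ as $t\to+\infty$ (a standard "$\int_0^\infty\phi<\infty$ and $\phi'$ bounded above $\Rightarrow\phi\to0$" lemma, applied carefully since here only one-sided control of the derivative is available, which is exactly what (2.12) provides).

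Next I would bootstrap from $p=\frac n2$ to all $p\ge 1$. For $1\le p\le\frac n2$ this is immediate from Hölder's inequality on the fixed-volume manifold (the total volume $V_g(t)=\int_M u^{2n/(n-2)}dV_{g_0}$ is bounded above and below by Proposition~3.2). For $p>\frac n2$ I would iterate the inequality (2.12)–(2.15), now with $t$-independent constants: writing $\Phi_p(t)=\int_M|R_g-f|^p dV_g$, combining (2.12) and (2.15) with $\varepsilon$ a fixed small constant gives a differential inequality of the form
$$\frac{d}{dt}\Phi_p(t)+C^{-1}\Phi_{pn/(n-2)}(t)^{(n-2)/n}\le C\,\Phi_p(t)^{\frac{2p-n+2}{2p-n}}+C\,\Phi_p(t),$$
so that $\Phi_p$ has a one-sided-bounded derivative while $\Phi_{pn/(n-2)}(t)^{(n-2)/n}$ — hence, by Jensen/Hölder on fixed volume, a quantity comparable to $\Phi_p(t)$ — has finite time integral once we know $\Phi_{pn/(n-2)}\to0$ from the previous step in the $L^\infty$-to-$L^p$ ladder; the same "$\int<\infty$ plus one-sided-bounded derivative" lemma then gives $\Phi_p(t)\to0$. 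Since $\frac{n}{n-2}>1$, finitely many iterations reach any prescribed $p$.

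The main obstacle, and the step I would be most careful about, is the passage "time-integrable quantity with one-sided-controlled derivative tends to zero." Unlike the clean case of a nonnegative integrable function with a \emph{two-sided} bounded derivative, here the differential inequalities (2.12)/(2.15) only bound $\Phi_p'$ from above (the Sobolev term on the left is favorable but its relation to $\Phi_p$ itself requires an interpolation between $\Phi_p$ and $\Phi_{pn/(n-2)}$). One has to argue that $\Phi_p$ cannot stay bounded away from zero on a sequence of unit time-intervals: on any such interval the finiteness of $\int_0^\infty(\text{Sobolev term})$ forces the Sobolev term — and then, via interpolation with the \emph{uniform} bound on $\Phi_{pn/(n-2)}$ supplied by the already-proven $L^\infty$ estimates and the $p=\frac n2$ case — forces $\Phi_p$ itself to be small somewhere on the interval, and the upper bound on $\Phi_p'$ propagates this smallness across the interval. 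Making this quantitative is the heart of the argument; the rest is the routine book-keeping of constants that are now legitimately $t$-independent because of Proposition~3.2.
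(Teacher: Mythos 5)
Your proposal follows the paper's proof essentially step for step: the uniform two-sided bounds of Proposition 3.2 turn every $e^{Ct}$ of Section 2 into a fixed constant, the energy identity (2.2)--(2.3) gives $\int_0^{+\infty}\int_M|R_g-f|^2\,dV_g\,dt<\infty$, and a Moser-type iteration along the ladder $p_k=\frac n2\left(\frac n{n-2}\right)^k$ combined with ``smallness at a sequence of times plus one-sided propagation of smallness'' yields the claim for all $p$. The one step you single out as delicate is settled in the paper not by interpolating against a uniform bound on $\Phi_{pn/(n-2)}$ (which is not available at that stage of the induction), but by passing to the logarithmic inequality $\frac{d}{dt}\log\phi_{p_{k+1}}\le C\bigl(\phi_{p_{k+1}}^{\alpha_k}+1\bigr)$ with $\alpha_k=\frac{2}{2p_{k+1}-n}\le\frac{n-2}{n}$, so that H\"older's inequality against the already-established decay of $\int_t^{t+1}\phi_{p_{k+1}}^{(n-2)/n}(s)\,ds$ bounds the propagation factor uniformly.
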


\medskip

\begin{proof}
In what follows $C$ denotes a positive constant depending only on $u_0, g_0, f , p $, and its value  may change from line to line.

\bigskip

We have by (2.2) for any $t \ge 0$,

$${n-2 \over 2}\int_0^t \int_M|R_g-f|^2 dV_g = {\mathcal E}(g(0)) - {\mathcal E}(g(t)). \eqno (3.13)$$

On the other hand, we have

$${\mathcal E}(g(t))  = \int_{M} \left(c_n\vert \nabla u\vert^2 + R_0u^2  -{n-2\over n}  f u^{2n\over n-2}\right) dV_{g_0},$$
and since    $u$ is uniformly bounded by Proposition 3.2, then we have $ {\mathcal E}(g(t))  \ge - C$. So it follows from (3.13) that

$$ \int_0^{+\infty}\hspace{-2mm}\int_M|R_{g(t)}-f|^2 dV_{g(t)} \le C.  \eqno (3.14)$$

Since by Proposition 3.2 the volume of $g(t)$ is  uniformly bounded, then it suffices to prove  (3.12) for a sequence $p_k \to +\infty$. We shall prove (3.12) by induction  when  $p = p_k$, where
$$ p_k := {n\over 2}\left({n\over n-2}\right)^k, \ k \in  \N . $$

First we prove (3.12) for $p_0 = {n\over 2}$. As  in the proof of Proposition 2.2, if we use Lemma 2.1  and the fact that $u$ is uniformly bounded by Proposition 3.2, then one has  for any $p >1$ :

$${d \over dt}\int_M |R_g-f|^{p} dV_g  + C^{-1} \left(\int_{M} |R_g - f|^{p n\over n-2} dV_{g}\right)^{n-2 \over n}  \le C \int_M |R_g-f|^{p} dV_g  $$
$$+  \      \left(p-{n\over 2} \right)\int_M |R_g-f|^{p+1} dV_g  .  \eqno (3.15) $$

Set
$$\phi_p(t) = \int_M |R_g-f|^{p} dV_g .$$

If $p_0 <2$, then by using H\"older's inequality and the fact that $u$ is uniformly bounded, we have
$$\phi_{p_0} \le C \phi_2^{p_0\over 2}.   \eqno (3.16)$$
 So it follows  from (3.15) by taking $p= p_0 = {n\over 2}$ that
$${d \over dt} \phi_{p_0}^{2\over p_0} \le C \phi_2 .  \eqno (3.17)$$

By (3.14) there is a sequence $t_{\nu}\to + \infty$ such that $\phi_2(t_{\nu}) \to 0$ and $\int_{t_{\nu}}^{+\infty}\phi_2(s) ds \to 0$. So by integrating (3.17)  on $[t_{\nu}, t]$ and using (3.16)  we get

$$\phi_{p_0}^{2\over p_0}(t)  \le \phi_{p_0}^{2\over p_0}(t_{\nu}) + C \int_{t_{\nu}}^{t} \phi_2(s) ds  \le  C  \phi_2(t_{\nu}) +  C \int_{t_{\nu}}^{t} \phi_2(s) ds  $$
Letting $t \to +\infty$ and $\nu \to + \infty$ we obtain $\phi_{p_0}(t)  \to 0$ as $t\to + \infty$.

\bigskip

If $p_0 \ge 2$, by using H\"older's inequality and  Young's inequality we have for any $\varepsilon >0$,
$$\int_M |R_g-f|^{p_0} dV_g  \le \varepsilon \left(\int_{M} |R_g - f|^{p_0 n\over n-2} dV_{g}\right)^{n-2 \over n} + \varepsilon^{-{n(p_0-2)\over4}} \left(\int_M |R_g-f|^{2} dV_g\right)^{p_0\over 2}. $$

By taking  \ $\varepsilon = {1\over 2}C^{-1}$, where $C^{-1}$ is the constant appearing in (3.15), we obtain from (3.15)(where we take $p= p_0 = {n\over 2}$),

$${d \over dt} \phi_{p_0} + C^{-1}\phi_{p_0n\over n-2}^{n-2\over n}  \le C \phi_{2}^{p_0\over 2}. \eqno (3.18)$$
But by H\"older's inequality, since the volume of $g$ is uniformly bounded, we have $\phi_2 \le C \phi_{p_0}^{2\over p_0}$  and $\phi_{p_0} \le C \phi_{p_0n\over n-2}^{n-2\over n} $.  Thus it follows from (3.18) that
$${d \over dt} \phi_{p_0}^{2\over p_0}+ C^{-1} \phi_{p_0}^{2\over p_0} \le C \phi_2  .  \eqno (3.19)  $$
If we integrate (3.19) on $[0, t]$ and using (3.14) we get
$$ \int_0^t \phi_{p_0}^{2\over p_0}(s) ds  \le  C, $$

\noindent which implies, since $t\ge 0$ is arbitrary,

$$ \int_0^{+\infty} \phi_{p_0}^{2\over p_0}(s) ds  \le  C.$$

Thus there exists a sequence $t_{\nu} \to + \infty$ such that $\phi_{p_0}^{2\over p_0}(t_{\nu}) \to 0 $ as $\nu \to + \infty$. If  we intgerate again (3.19) on $[t_{\nu} , t]$,  we obtain
$$\phi_{p_0}^{2\over p_0}(t) \le \phi_{p_0}^{2\over p_0}(t_{\nu}) + C \int_{t_{\nu}}^{t} \phi_2(s) ds . $$

 \noindent By using (3.14), it follow that $\phi_{p_0}^{2\over p_0}(t) \to 0$ as $t \to +\infty$.

\medskip

\medskip

\medskip

Now suppose by induction that

$$ \lim_{t\to +\infty}\phi_{p_k}(t)  = 0. \eqno (3.20)$$

First let us prove that

$$\lim_{t\to +\infty} \int_t^{t+1}\phi_{p_{k+1}}^{n-2\over n}(s) ds = 0 . \eqno (3.21)$$

We may suppose $k \ge 1$. Indeed,  if $k=0$ (that's $p_k = p_0 =  {n\over 2}$),  then (3.21) follows directly from  (3.15)(with $p= {n\over 2}$) by integrating on $[t, t+1]$ and using (3.20).  Thus let us prove (3.21) when $k \ge 1$.

\medskip

By using H\"older's inequality and Young's inequality we have for any $p >{n\over 2}$ and $\varepsilon >0$,
$$\int_M |R_g-f|^{p+1} dV_g  \le \varepsilon \left(\int_{M} |R_g - f|^{p n\over n-2} dV_{g}\right)^{n-2 \over n} + \varepsilon^{-{n\over 2p-n}} \left(\int_M |R_g-f|^{p} dV_g\right)^{1 + {2  \over 2p-n }} \eqno (3.22)$$

By taking $p =p_k$, \ $\varepsilon = {1\over 2}C^{-1}$, where $C$ is the constant appearing in (3.15), we obtain from (3.15)
$$ {d \over dt} \phi_{p_k}  + {1\over 2}C^{-1} \phi_{p_{k+1}}^{n-2\over n} \le C \phi_{p_k}^{1 + { 2 \over 2p_k-n}} +  C \phi_{p_k} $$
Then (3.21) follows by integrating on $[t, t+1]$ and using (3.20).

\medskip

Now if  we apply  (3.22) by taking $p = p_{k+1}$ and $\varepsilon = {C^{-1} \over p_{k+1}-{n\over 2}}$, where $C$ is the constant appearing in (3.15), we obtain from (3.15) (where we take $p= p_{k+1}$)
$$ {d \over dt} \phi_{p_{k+1}}   \le C \phi_{p_{k+1}}^{1 + \alpha_k} + C \phi_{p_{k+1}},  $$
where $\alpha_k = {2 \over 2p_{k+1}-n}$. The last inequality  is equivalent  to
%where $\alpha_k = {2 \over 2p_{k+1} -n}$.   Note here that $ \alpha_k \le  {n-2 \over n}$, so if we set $\alpha = {n-2 \over n} - \alpha_k $, then it follows I from the last inequality that follows that
$$ {d \over dt}\log \phi_{p_{k+1}}  \le   C\left( \phi_{p_{k+1}}^{\alpha_k}  + 1 \right).   \eqno (3.23)  $$

By (3.21) there is a sequence $t_{\nu} \to + \infty$ such that  $\nu \le  t_{\nu} \le \nu + 1$  satisfying $\phi_{p_{k+1}}(t_{\nu}) \to 0$ as $\nu \to +\infty$.  If we integrate (3.23) on $[t_{\nu}, t]$ where $t \in [ \nu , \nu+ 1] $, we obtain

$$\log{\phi_{p_{k+1}}(t)\over\phi_{p_{k+1}}(t_{\nu})}  \le  C \left( \int_{\nu}^{\nu+1} \phi_{p_{k+1}}^{\alpha_k}(s) ds + 1 \right) . \eqno (3.24)  $$

We note here that $ \alpha_k \le  {n-2 \over n}$, so by H\"older's inequality we have

$$  \int_{\nu}^{\nu+1} \phi_{p_{k+1}}^{\alpha_k}(s) ds \le \left(\int_{\nu}^{\nu+1} \phi_{p_{k+1}}^{n-2\over n}(s) ds  \right)^{{n\alpha_k \over n-2}} \to 0  \  \hbox{as}  \  \nu \to +\infty$$
by (3.21). Thus it follows from (3.24)
$$\log{\phi_{p_{k+1}}(t)\over\phi_{p_{k+1}}(t_{\nu})}  \le  C$$

\noindent which implies that  $\phi_{p_{k+1}}(t) \to 0 $ as $t \to + \infty$.   The proof of Proposition 3.3 is then complete.

\end{proof}

\bigskip

Now we  can prove uniform $C^{\alpha}$-estimates on the solution.

\bigskip

\begin{prop}
Let $0< u_0 \in C^{\infty}(M)$ such that $u_0 \le {\bar u}$ where ${\bar u}$ is given by Proposition 3.1. Then  the solution  $u$ of (1.4) satisfies
for some $\alpha \in (0, 1)$
$$\| u\|_{C^{\alpha}([0, +\infty) \times M)} \le C, $$

\noindent where  $C$ is a positive constant depending only on $u_0, g_0$ and $f$.

\end{prop}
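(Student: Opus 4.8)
The plan is to follow the lines of the proof of Proposition 2.3, with Proposition 3.2 and Proposition 3.3 now playing the roles that Proposition 2.1 and Proposition 2.2 played there. The two inputs I would extract first are: (i) the uniform two-sided bound $0<\min(C_0,\min_M u_0)\le u(t,x)\le\max_M\bar u$ on all of $[0,+\infty)\times M$, from Proposition 3.2; and (ii) a bound $\|R_{g(t)}\|_{L^p(M,dV_{g_0})}\le C_p$, uniform in $t\in[0,+\infty)$, for every $p\in[1,+\infty)$. For (ii) I would note that by Proposition 3.3 the continuous function $t\mapsto\int_M|R_{g(t)}-f|^p\,dV_{g(t)}$ has limit $0$ as $t\to+\infty$, hence is bounded on $[0,+\infty)$; since by (i) the volume forms $dV_{g(t)}$ and $dV_{g_0}$ are uniformly comparable and $f$ is bounded on $M$, assertion (ii) follows.

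Next I would use the two equivalent forms of the flow. On the one hand, the definition of $R_g$ reads $-c_n\Delta u+R_0u=R_g\,u^{N}$, whose right-hand side is, at each fixed $t$, bounded in $L^p(M)$ uniformly in $t$ by (i) and (ii); the elliptic $L^p$ estimate then gives $\|u(t,\cdot)\|_{W^{2,p}(M)}\le C_p$ for all $t\ge0$, and choosing $p>n$ and invoking the Sobolev embedding $W^{2,p}(M)\hookrightarrow C^{1,\beta}(M)$ with $\beta=1-n/p$ yields a uniform spatial bound $\sup_{t\ge0}\|u(t,\cdot)\|_{C^{1,\beta}(M)}\le C$. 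On the other hand, $(1.4)$ is equivalent to $\partial_t u=-\frac{n-2}{4}(R_g-f)u$, so (i) and (ii) again give $\|\partial_t u(t,\cdot)\|_{L^p(M)}\le C_p$ uniformly in $t$, hence $\|u(t,\cdot)-u(s,\cdot)\|_{L^p(M)}\le C_p|t-s|$ for all $s,t\ge0$. Interpolating this Lipschitz-in-time $L^p$ control against the uniform-in-time $C^1$ control in space, via an inequality of the type $\|w\|_{C^0(M)}\le C\|w\|_{L^p(M)}^{\,p/(n+p)}\,\|w\|_{C^1(M)}^{\,n/(n+p)}$, I would obtain $\|u(t,\cdot)-u(s,\cdot)\|_{C^0(M)}\le C|t-s|^{\,p/(n+p)}$. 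Putting the uniform spatial Hölder bound together with this uniform temporal Hölder bound gives $\|u\|_{C^\alpha([0,+\infty)\times M)}\le C$ for $\alpha=\min\bigl(\beta,\frac{p}{n+p}\bigr)$.

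A second, cleaner route — closer to the argument cited for Proposition 2.3 — is purely parabolic: setting $v=u^{N}$, equation $(1.4)$ becomes $\partial_t v=\mathrm{div}_{g_0}\!\bigl(a\,\nabla v\bigr)+\frac{n+2}{4}\bigl(fv-R_0v^{1/N}\bigr)$ with $a=\frac{(n+2)c_n}{4N}\,v^{1/N-1}$; by Proposition 3.2, $v$ is bounded above and below, so $a$ is uniformly elliptic and the lower-order terms are uniformly bounded, and the De Giorgi--Nash--Moser interior estimate on the parabolic cylinders $[\tau,\tau+2]\times M$ gives $\|v\|_{C^\alpha([\tau+1,\tau+2]\times M)}\le C$ with $C$ and $\alpha$ independent of $\tau\ge0$; since $v$ stays bounded away from $0$ this transfers to $u$, and combining with Proposition 2.3 on $[0,2]\times M$ yields the estimate on $[0,+\infty)\times M$. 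Either way, the only genuinely delicate point is the uniformity as $t\to+\infty$: it holds precisely because each elliptic (resp.\ parabolic) estimate is applied either at a fixed time with constants depending only on the ellipticity of the equation and on the $t$-uniform bounds (i)--(ii), or on parabolic cylinders of a fixed size, so the Hölder constant and exponent do not degenerate; the short-time part $t\in[0,2]$ is already handled by Proposition 2.3.
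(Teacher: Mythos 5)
Your proposal is correct, and it in fact contains the paper's argument as its second half: the paper's entire proof of this proposition is a one-line reference to Proposition~2.6 of Brendle \cite{sB1}, whose mechanism is exactly your parabolic route --- rewrite (1.4) as a divergence-form, uniformly parabolic equation for $v=u^N$ (uniform ellipticity coming from the two-sided bound of Proposition~3.2, the uniform $L^p$ control of $R_g$ coming from Proposition~3.3), and apply the De Giorgi--Nash--Moser interior estimate on translated cylinders of fixed size so that the H\"older constant does not degenerate as $t\to+\infty$. Your first route is a genuinely different and fully self-contained alternative that the paper does not give: it splits the regularity into a purely elliptic spatial estimate at each fixed time ($\|u(t,\cdot)\|_{W^{2,p}}\le C_p$ from $-c_n\Delta u+R_0u=R_gu^N$, hence a uniform $C^{1,\beta}$ bound for $p>n$) and a Lipschitz-in-time $L^p$ estimate from $\partial_tu=-\tfrac{n-2}{4}(R_g-f)u$, glued by the interpolation inequality $\|w\|_{C^0}\le C\|w\|_{L^p}^{p/(n+p)}\|w\|_{C^1}^{n/(n+p)}$. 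What the elliptic route buys is transparency and independence from the cited external result; what the parabolic route buys is economy (no elliptic $L^p$ theory or interpolation needed, only boundedness of $v$ and of the coefficients) and closer alignment with the literature the paper leans on. Both arguments correctly isolate the one delicate point, namely uniformity of the constants as $t\to+\infty$, and both resolve it by applying estimates whose constants depend only on $t$-independent data; I see no gap in either.
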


\begin{proof}
By using Proposition 3.2  and Proposition 3.3, the proof is identical to that of Proposition 2.6 in Brendle \cite{sB1}.
\end{proof}

\bigskip

Now we are in position to prove Theorem 1.2.

\medskip

\begin{proof}[Proof of Theorem 1.2]

Let $g = u^{4\over n-2} g_0$ the solution of (1.3) given by Theorem 1.1. By Proposition 3.2 we have that $u$ is bounded from below and above  uniformly  on $[0, +\infty)$.   As in the proof of Theorem 1.1, this implies  that equation (1.4) is uniformly parabolic and  by Proposition 3.4   we have uniform $C^{\alpha}$-bound on the solution $u$ on $[0, +\infty)\times M$. We then apply the classical regularity theory of linear parabolic equations to obtain uniform $C^{k}$-bound for any $k \in \N$, that's
$$\|u(t)\|_{C^k(M)} \le C_k , \eqno (3.25) $$

\noindent for some constant $C_k$ independent of $t$. It follows from (3.25)  that there is  a sequence  $t_{\nu} \to + \infty$  such that $u(t_{\nu})$ converges in $C^k(M)$ for any $k \in \N$, to some function $ u_{\infty} \in
C^{\infty}(M)$. Since $u(t)$ is uniformly bounded from below by Proposition 3.2, then we have $u_{\infty} >0$.  By using Proposition 3.3 and Passing to the limit when $\nu \to \infty$,  we see that $ R_{g_{\infty}} = f$, where $g_{\infty} = u_{\infty}^{4\over n-2} g_0$, that's $f$ is the scalar curvature of $g_{\infty}$.  By the general result of Simon \cite {lS} on evolution equations, $u_{\infty}$ is the unique limit of $u(t)$ when $ t \to + \infty$.

\end{proof}

\bigskip

\begin{proof}[Proof of Corollary 1.1]

Since $f <0$ almost everywhere on $M$, then for $\varepsilon >0$ small enough, the open set
$$\Omega_{\varepsilon} = \{ \ x \in M \ : \  f(x) > -\varepsilon \ \}$$
 has arbitrary small volume.  This implies that the first eigenvalue $\mu_{\Omega_{\varepsilon}}$ of $-c_n\Delta$   on $\Omega_{\varepsilon}$ with zero Dirichlet conditions is arbitrary  large if $\varepsilon$ is small enough.  But since
 $$\lambda_{\Omega_{\varepsilon}}  \ge \mu_{\Omega_{\varepsilon}} + \min_{M}R_0 $$
 then we have $ \lambda_{\Omega_{\varepsilon}} >0$ if $\varepsilon $ is small enough. Thus the condition (H1) is satisfied with $\Omega = \Omega_{\varepsilon}$. Condition (H2) is also satisfied since by continuity of $f$ we have $f \le 0$ everywhere on $M$.

  \end{proof}

\begin{proof}[Proof of Theorem 1.3]
Suppose that condition (H1) is not satisfied, that's, for any open set $\Omega \subset M$ such that $f <  0$ on $M\setminus\Omega$,  we suppose  $\lambda_{\Omega} \le 0$.
For $\varepsilon >0$, consider the following familly of open sets   :
$$\Omega_{\varepsilon} = \{ \ x \in M \ : \  f(x) > -\varepsilon \ \} . $$

For the simplicity of notation we set  $\lambda_{\varepsilon} =  \lambda_{\Omega_{\varepsilon}}$. According to our hypothesis we have then

$$\lambda_{\varepsilon} \le 0 \   \  \hbox{for all} \  \varepsilon > 0.  \eqno (3.26)$$

By using Sard's theorem, there exists a sequence $\varepsilon_n \to 0$ such that $\varepsilon_n$    is a regular value of $f$ and then $\Omega_{\varepsilon_n}$ has  a smooth boundary $\partial\Omega_{\varepsilon_n}  = \{ x \in M \ : \  f(x) = -\varepsilon_n \ \}$.

\medskip

Let $\varphi_n$ an eigenfunction of $-c_n\Delta + R_0$ associated with $\lambda_{\varepsilon_n}$. As already mentioned in the proof of proposition 3.2, we have by the maximum principle that
$$\varphi_n > 0 \  \hbox{on} \  \Omega_{\varepsilon_n} \  \hbox{and} \   {\partial \varphi_n \over \partial \nu } \le 0   \
\hbox{on} \  \partial \Omega_{\varepsilon_n} \eqno (3.27) $$
where $\nu$ is the outer normal vector to $\partial\Omega_{\varepsilon_n}$.  By normalising if necessary, we may assume that
$$\int_{\Omega_{\varepsilon_n}}\varphi_n \hspace{1mm} dV_{g_0} = 1  . \eqno (3.28)  $$

If we multiply equation (1.4) by $\varphi_n$ and integrate on $\Omega_{\varepsilon_n}$, we have
$${d\over dt}\int_{\Omega_{\varepsilon_n}}u^N\varphi_n \hspace{0.5mm} dV_{g_0} = {n+2\over 4} \int_{\Omega_{\varepsilon_n}}(c_n\Delta u - R_0 u)\varphi_n  \hspace{0,5mm} dV_{g_0} +
{n+2\over 4} \int_{\Omega_{\varepsilon_n}} fu^N \varphi_n  \hspace{0,5mm} dV_{g_0} . \eqno (3.29) $$

An integration by parts gives

$$\int_{\Omega_{\varepsilon_n}}(c_n\Delta u - R_0 u)\varphi_n  \hspace{0,5mm} dV_{g_0} = -\lambda_{\varepsilon_n}\int_{\Omega_{\varepsilon_n}} u\varphi_n \hspace{0,5mm} dV_{g_0}  - c_n\int_{\partial \Omega_{\varepsilon_n}}{\partial \varphi_n \over \partial \nu } u \hspace{0,5mm} dV_{g_0} .$$

\medskip

Since $\lambda_{\varepsilon_n} \le 0$, then we obtain by using (3.27)

$$\int_{\Omega_{\varepsilon_n}}(c_n\Delta u - R_0 u)\varphi_n  \hspace{0,5mm} dV_{g_0}  \ge  -  \lambda_{\varepsilon_n} \inf_{M}u  -  c_n \inf_{M}u \int_{\partial \Omega_{\varepsilon_n}}{\partial \varphi_n \over \partial \nu } \hspace{0,5mm} dV_{g_0}  \eqno (3.30)$$

 On the other hand we have

 $$c_n\int_{\partial \Omega_{\varepsilon_n}}{\partial \varphi_n \over \partial \nu }  \hspace{0,5mm} dV_{g_0} = c_n\int_{\Omega_{\varepsilon_n}}\Delta\varphi_n  \hspace{0,5mm} dV_{g_0} = \int_{\Omega_{\varepsilon_n}}(-\lambda_{\varepsilon_n} + R_0 ) \varphi_n \hspace{0,5mm} dV_{g_0} $$
 and by using (3.28) we get
$$ - c_n\int_{\partial \Omega_{\varepsilon_n}}{\partial \varphi_n \over \partial \nu }  \hspace{0,5mm} dV_{g_0} \ge  \lambda_{\varepsilon_n} + \inf_M R_0 . \eqno (3.31) $$

 Combining (3.30) and (3.31) we obtain

  $$\int_{\Omega_{\varepsilon_n}}(c_n\Delta u - R_0 u)\varphi_n  \hspace{0,5mm} dV_{g_0}  \ge \inf_M R_0 \inf_Mu $$

 If we substitute in (3.29) we get
 $${d\over dt}\int_{\Omega_{\varepsilon_n}}u^N\varphi_n \hspace{0.5mm} dV_{g_0} \ge  {n+2\over 4} \inf_M R_0 \inf_Mu +
 {n+2\over 4} \int_{\Omega_{\varepsilon_n}} fu^N \varphi_n  \hspace{0,5mm} dV_{g_0} . \eqno (3.32) $$

By Proposition 3.2 we have $u \ge C_0$, where $C_0$ is a positive constant depending only on $u_0, g_0$ and $f$. It follows from  (3.32) by  using the fact that $f > -\varepsilon_n $ on $\Omega_{\varepsilon_n}$,

$${d\over dt}\int_{\Omega_{\varepsilon_n}}u^N\varphi_n \hspace{0.5mm} dV_{g_0} \ge    C - {n+2\over 4}\  \varepsilon_n \int_{\Omega_{\varepsilon_n}} u^N \varphi_n  \hspace{0,5mm} dV_{g_0} ,$$
where $C$ is a positive constant depending only on $u_0, g_0$ and $f$. By integrating this differential inequality on $[0, t]$, we get

$$ \int_{\Omega_{\varepsilon_n}}u^N(t)\varphi_n \hspace{0.5mm} dV_{g_0}  \ge
\int_{\Omega_{\varepsilon_n}}u_0^N\varphi_n \hspace{0.5mm} dV_{g_0} +   Ct -{ n+2\over 4}\  \varepsilon_n \int_0^t \int_{\Omega_{\varepsilon_n}} u^N(s) \varphi_n  \hspace{0,5mm} dV_{g_0} ds ,$$
which implies by using (3.28)
$$\max_{x\in M}u^N(t,x) \ge Ct - { n+2\over 4} \  \varepsilon_n \int_0^t  \max_{x\in M}u^N(s,x) ds.  $$
Letting $n \to +\infty$, we obtain
$$ \max_{x\in M}u^N(t,x) \ge Ct . $$

The proof of Theorem 1.3 is complete.

\end{proof}


\begin{thebibliography}{9}



 \bigskip


  \bibitem{aA}
A. Ambrosetti, A. Malchiodi,
\emph{ A multiplicity result for the Yamabe problem on $\mathbb{S}^n$},
J. Funct. Anal.  \textbf{168} (1999),  529-561.


  \bigskip

 \bibitem{tA}
T. Aubin,
\emph{ Equations diff\'erentielles non lin\'eaires et probl\`eme de Yamabe concernant la courbure scalaire},
J. Math. Pures Appl.  \textbf{55} (1976),  269-296.



 \bigskip


 \bibitem{pB}
P. Baird, A. Fardoun, R.Regbaoui,
\emph{ The evolution of the scalar curvature of a surface to a prescribed function},
Ann. Sc. Norm. Super. Pisa. Cl. Sci.   \textbf{(5)} 3 (2004),  17-38.
\bigskip

 \bibitem{aB}
A. Besse,
\emph{ Einstein manifolds},
Springer, Berlin  (1976).

 \bigskip



  \bibitem{sB}
S. Bismuth,
\emph{ Prescribed scalar curvature on a complete Riemannian manifold in the negative case},
J. Math. Pures Appl. \textbf{79} (2000),  941-951.

\bigskip

 \bibitem{sB1}
S. Brendle,
\emph{Convergence of the Yamabe flow for arbitrary initial energy},
J. Differ. Geom \textbf{69} (2005),  217-278.

 \bigskip

 \bibitem{sB2}
S. Brendle,
\emph{Convergence of the Yamabe flow in dimension 6 and higher},
Invent. Math. \textbf{170} (2007),  541-576.

 \bigskip

 \bibitem{bC}
B. Chow,
\emph{The Yamabe flow on locally conformally flat manifolds with positive Ricci curvature},
Comm. Pure Appl. Math.  \textbf{45} (1992),  1003-1014.

 \bigskip

 \bibitem{xC}
X. Chen, X. Xu,
\emph{The scalar curvature flow on $\mathbb{S}^n$-perturbation theorem revisited},
Invent. Math.  \textbf{86} (1986),  243-254.

 \bigskip

 \bibitem{jE}
J. Escobar, R. Schoen,
\emph{Conformal metrics with prescribed scalar curvature},
Invent. Math.  \textbf{187} (2012),  395-506.

 \bigskip

 \bibitem{rH}
R. Hamilton,
\emph{The Ricci flow on surfaces},
Mathematics and General Relativity, Comtemporary Math. \textbf{187}, Amer. Math. Soc., Provdence, RI (1988), 237-262.


 \bigskip

 \bibitem{eH}
E. Hebey,  M. Vaugon,
\emph{Le probl\`eme de Yamabe \'equivariant},
Bull. Sci. Math.  \textbf{117} (1993),  241-286.

 \bigskip

  \bibitem{jK}
J. Kazdan,  F. Warner,
\emph{Existence and conformal deformation of metrics with
prescribed Gaussian and scalar curvatures},
Ann. of Math.  \textbf{101} (1975),  317-331.


 \bigskip

 \bibitem{jL}
J-M. Lee,  M. Parker,
\emph{The Yamabe problem},
Bull. Am. Math. Soc.  \textbf{17} (1987),  37-91.

 \bigskip


 \bibitem{aR}
A. Rauzy,
\emph{Courbures scalaires des vari�t�s d'invariant conforme n�gatif},
Trans. Amer. Math. Soc.  \textbf{347} (1995),  .


 \bigskip


 \bibitem{rS1}
R. Schoen,
\emph{Conformal deformation of a Riemanniannmetric to constant scalar curvature },
J. Differ. Geom.  \textbf{20} (1984),  4729-4745.

 \bigskip

 \bibitem{rS2}
R. Schoen,
\emph{ A report on some recent progress on nonlinear problems in geometry},
In: Surveys in Differential Geometry,  Lehigh University, Bethlehem, PA
(1991), 201?241.

 \bigskip

 \bibitem{hS}
H. Schwetlick, M. Struwe,
\emph{Convergence of theYamabe flowfor large energies},
J. Reine. Angew. Math. \textbf{562} (2003),  59-100.

 \bigskip

   \bibitem{lS}
L. Simon,
\emph{Asymptotics for a class of non-linear evolution equations with applications
to geometric problems,},
Ann. of Math.  \textbf{118} (1983),  525-571.

\bigskip

  \bibitem{mS}
 M. Struwe,
\emph{ A flow approach to Nirenberg's problem},
Duke Math. J. \textbf{128} (2005),  19-64.

 \bigskip

  \bibitem{jV}
J. Vazquez, L. V\'eron,
\emph{ Solutions positives d'�quations elliptiques semi-lin�aires sur des
vari�t�s riemanniennes compactes},
C. R. Acad. Sei. Paris. S\'er. I  Math. \textbf{312} (1991),  811-815.

 \bigskip

  \bibitem{rY}
 R. Ye,
\emph{ Global existence and convergence of the Yamabe flow},
J. Differ. Geom. \textbf{39} (1994),  35-50.

 \bigskip

\end{thebibliography}
\end{document}